\documentclass[a4paper,12pt]{amsart}

\usepackage[T1]{fontenc}
\usepackage{amsfonts}
\usepackage{amsmath}
\usepackage{amssymb}
\usepackage{mathrsfs}
\usepackage{microtype}
\usepackage{array}
\usepackage{booktabs}
\usepackage{float}
\usepackage{tabularx}
\usepackage[colorlinks=true,linkcolor=blue,citecolor=blue,urlcolor=blue]{hyperref}

\newcommand{\doi}[1]{\href{https://doi.org/#1}{doi:#1}}
\hypersetup{%
  pdftitle={Power-kernel fractional Sturm--Liouville operators: a graph realization and boundary-independent singular-value asymptotics},
  pdfauthor={Niyaz Tokmagambetov},
  pdfkeywords={fractional Sturm--Liouville operator, graph realization, boundary triplet, self-adjoint extension, Schatten class, Weyl asymptotics}
}

\newcolumntype{Y}{>{\raggedright\arraybackslash}X}

\numberwithin{equation}{section}
\theoremstyle{plain}
\newtheorem{thm}{Theorem}[section]
\newtheorem{prop}[thm]{Proposition}
\newtheorem{cor}[thm]{Corollary}
\newtheorem{lemma}[thm]{Lemma}

\newtheorem{property}[thm]{Property}

\theoremstyle{definition}
\newtheorem{defi}[thm]{Definition}
\newtheorem{rem}[thm]{Remark}
\newtheorem{ex}[thm]{Example}

\newcommand{\Dom}{\operatorname{Dom}}

\newcommand{\rank}{\operatorname{rank}}
\newcommand{\C}{\mathbb C}
\newcommand{\Dmax}{\mathfrak D_{\max}}

\newcommand{\Lmax}{\mathcal L_{M}}
\newcommand{\Lmin}{\mathcal L_{m}}
\newcommand{\Xiop}{\Xi}
\newcommand{\Gzero}{\Gamma_{0}}
\newcommand{\Gone}{\Gamma_{1}}
\newcommand{\Hspace}{L^2(0,1)}

\begin{document}

\title[Power-kernel fractional Sturm--Liouville operators]
{Power-kernel fractional Sturm--Liouville operators: a graph realization and boundary-independent singular-value asymptotics}

\author[N. Tokmagambetov]{Niyaz Tokmagambetov}
\address{Institute of Mathematics and Mathematical Modeling, 28 Shevchenko str., Almaty 050010, Kazakhstan}
\email{tokmagambetov@math.kz; niyaz.tokmagambetov@gmail.com}

\date{}

\subjclass[2020]{Primary 26A33, 34L10, 47A10; Secondary 34B24, 45D05, 47A55, 47G20}
\keywords{Fractional Sturm--Liouville operator, graph realization, boundary triplet, self-adjoint extension, Schatten class, Weyl asymptotics}

\begin{abstract}
We construct a closed graph realization in $L^2(0,1)$ for the power-kernel fractional Sturm--Liouville expression
\[
        \mathcal L u=\mathcal D_1^\alpha D_0^\alpha u,
        \qquad \frac12<\alpha<1.
\]
The graph domain contains the singular homogeneous mode $x^{\alpha-1}$, which belongs to $L^2(0,1)$ exactly in this range.  We first define the domain independently through the distributional Caputo composition and then prove the equivalent range characterization
\[
        I_0^{1-\alpha}u\in H^1(0,1),
        \qquad
        D_0^\alpha u-c\in\operatorname{ran}I_1^\alpha
\]
for some constant $c$.  We prove the Volterra representation of every vector in this domain, graph-norm completeness, and identify the graph operator as the adjoint of its closed densely defined zero-trace restriction.  Four bounded endpoint traces then form an ordinary boundary triplet.  Hence all self-adjoint extensions of the zero-trace operator, including coupled endpoint conditions, are parametrized by Lagrangian two-dimensional boundary subspaces.

For each self-adjoint realization we give an exact zero-eigenvalue criterion and an explicit inverse formula whenever zero is regular.  The inverse is $I_0^\alpha I_1^\alpha$ plus a finite-dimensional correction whose exact rank is read off from the boundary matrix.  For every admissible self-adjoint realization satisfying
$0\in\rho(\mathcal L_\omega)$, sharp singular-value asymptotics for fractional integration yield
\[
        s_n(\mathcal L_\omega^{-1})
        =(\pi n)^{-2\alpha}\bigl(1+O(n^{-1})\bigr),
        \qquad n\to\infty.
\]
Consequently,
\[
        \mathcal L_\omega^{-1}\in S_p
        \quad\Longleftrightarrow\quad
        p>\frac1{2\alpha},
\]
with the corresponding weak-Schatten endpoint and Weyl counting asymptotics for $|\mathcal L_\omega|$.  The contribution of this spectral statement is its validity for every invertible self-adjoint boundary plane; sharper phase information was previously known for the positive reference boundary problem.  The paper also records improvements to earlier power-kernel formulas concerning endpoint signs, moment functionals, homogeneous corrections, zero modes and the Schatten threshold.  As an application, the positive reference realization is used to state a Caputo-time diffusion problem as a mild-solution theorem in $C([0,T];L^2(0,1))$.
\end{abstract}

\maketitle

\section{Introduction}

Fractional diffusion models use nonlocal kernels to encode memory and anomalous transport.  The general fractional-dynamics background is surveyed in \cite{MK00}; models in heterogeneous media separated by membranes are discussed in \cite{LRTZE16}; and the broader membrane-science context can be found in \cite{AT13,H96}.  The present paper does not develop a new physical membrane model.  Its purpose is to give a precise Hilbert-space realization of the one-dimensional spatial operator that appears in such fractional diffusion equations, with particular attention to domains, endpoint traces and self-adjoint boundary conditions.

We use the standard Riemann--Liouville and Caputo fractional operators in the notation of \cite{SKM93,KST06}.  The formal expression studied in the paper is
\begin{equation}\label{EQ:L-intro}
        \mathcal L u(x):=\mathcal D_1^\alpha\left[D_0^\alpha u\right](x),
        \qquad 0<x<1, \qquad \frac12<\alpha<1,
\end{equation}
where $D_0^\alpha$ is the left Riemann--Liouville derivative and $\mathcal D_1^\alpha$ is the right Caputo derivative.  The range $\alpha>1/2$ is structural: it is exactly the condition under which
\[
        \frac{x^{\alpha-1}}{\Gamma(\alpha)}\in L^2(0,1),
        \qquad
        f\mapsto \int_0^1s^{\alpha-1}f(s)\,ds
        \quad\text{is bounded on }L^2(0,1).
\]
Thus the singular homogeneous mode is a genuine $L^2$ boundary mode in precisely this range.

The power-kernel expression \eqref{EQ:L-intro}, its Green formula and classes of
self-adjoint boundary conditions were studied in
\cite{TT16,TT18G,TT18M,TT19}.  In particular, the endpoint quantities and
matrix description of extensions have antecedents in those papers; part of the
purpose here is to place the improved formulas on a closed $L^2$ graph domain
and to prove the adjoint and boundary-triplet statements without relying on a
formal endpoint calculation.  Other fractional Sturm--Liouville formulations,
including Robin, integral and numerical approaches, appear in
\cite{KA13,Kli19,KCB18,Kli21,ZK13}.  The positive reference problem associated
with the compact operator $I_0^\alpha I_1^\alpha$ was also analyzed in
\cite{CK21}, where a sharper phase asymptotic was obtained.  Recent related
developments include Al-Jararha, Al-Refai and Luchko \cite{AAL24}, who construct
a self-adjoint fractional Sturm--Liouville problem for general fractional
derivatives generated by Sonin kernels, and Frymark and Liaw \cite{FL23}, who
use boundary triples and perturbation theory for singular classical
Sturm--Liouville operators with two limit-circle endpoints.  Against this
background, the contribution of the present paper is a distributionally
defined closed graph domain for the specific left Riemann--Liouville/right
Caputo expression, a rigorous ordinary boundary triplet on that domain, exact
finite-dimensional zero-mode and inverse matrices, and a common leading
singular-value and Weyl law for all invertible self-adjoint boundary planes.

The graph-domain point of view adopted here differs from imposing classical
endpoint values: ordinary endpoint traces do not see the singular mode
$x^{\alpha-1}$ correctly.  The representation proved below is
\begin{equation}\label{intro-graph-representation}
        u=I_0^\alpha I_1^\alpha f+C_0\frac{x^{\alpha-1}}{\Gamma(\alpha)}
          +C_1\frac{x^\alpha}{\Gamma(\alpha+1)},
        \qquad f\in L^2(0,1),\quad C_0,C_1\in\C.
\end{equation}
This representation separates the particular Volterra part from the two homogeneous modes before boundary conditions are imposed.  It also produces four bounded graph traces, namely the canonical endpoint values of $I_0^{1-\alpha}u$ and $D_0^\alpha u$.

The first main result is a distributional domain theorem.  Starting without the
ansatz \eqref{intro-graph-representation}, set
$F=I_0^{1-\alpha}u$.  We require $F\in H^1(0,1)$ and, for some $c\in\C$,
\[
 I_1^{1-\alpha}(F'-c)\in H^1(0,1),
 \qquad \bigl(I_1^{1-\alpha}(F'-c)\bigr)(1)=0.
\]
This is the endpoint-normalized weak graph of the right Caputo derivative.
We prove that $c$ is unique, that $F'$ then has a continuous representative
with $F'(1)=c$, and that this domain is
exactly \eqref{intro-graph-representation}, or equivalently the range domain
\[
 I_0^{1-\alpha}u\in H^1(0,1),\qquad
 D_0^\alpha u-c\in\operatorname{ran}I_1^\alpha
 \quad\text{for some }c\in\C.
\]
Thus the Volterra representation is a theorem rather than the definition of
the domain.  The graph operator is subsequently identified as the adjoint of
its closed densely defined zero-trace restriction.  The Green formula is then
derived on this graph domain and written in boundary coordinates.

The second main result is the extension theory.  Using the ordinary boundary-triplet framework of \cite{GG91,BHdS20}, the four graph traces yield a boundary triplet for the maximal operator.  Hence self-adjoint realizations are exactly the restrictions
\[
        \omega\Xi u=0,
        \qquad \omega\in\C^{2\times4},\quad
        \rank\omega=2,
        \quad \omega J\omega^*=0,
\]
that is, the Lagrangian two-dimensional boundary subspaces of the finite-dimensional boundary space.  This includes separated and coupled endpoint conditions.

The third main result is an explicit inverse and zero-mode criterion.  For every admissible boundary matrix $\omega$, zero is regular precisely when a $2\times2$ matrix $M_\omega$ is invertible.  In that case
\[
        \mathcal L_\omega^{-1}=I_0^\alpha I_1^\alpha+F_\omega,
        \qquad \rank F_\omega\le2,
\]
and the exact rank of $F_\omega$ is determined by two columns of $\omega$.  This makes the zero-eigenvalue obstruction and the boundary-dependent homogeneous correction explicit.

The fourth main result concerns the boundary-independent leading spectral law.
The singular-value theory of Riemann--Liouville fractional integration was
developed in \cite{FW86,Dos93,TG94}; for the range used here the estimate needed
below is Burman's sharper form \cite{Bur07},
\begin{equation}\label{intro-burman-asymptotics}
        s_n(K_\alpha)=(\pi n)^{-\alpha}\bigl(1+O(n^{-1})\bigr).
\end{equation}
Combining \eqref{intro-burman-asymptotics} with the identity
$ I_0^\alpha I_1^\alpha=(I_1^\alpha)^*I_1^\alpha $ and finite-rank singular-value inequalities from trace-ideal theory \cite{GK69,Sim05}, we obtain
\[
        s_n(\mathcal L_\omega^{-1})
        =(\pi n)^{-2\alpha}\bigl(1+O(n^{-1})\bigr),
\]
for every invertible self-adjoint realization.  In particular,
\[
        \mathcal L_\omega^{-1}\in S_p
        \quad\Longleftrightarrow\quad
        p>\frac1{2\alpha},
\]
with the weak-Schatten endpoint and the associated Weyl counting constant.
For the reference boundary conditions, the inverse is
$I_0^\alpha I_1^\alpha$.  The corresponding reference eigenproblem is the
problem denoted by $P'$ in \cite{CK21}, where the sharper phase formula
\[
 \lambda_n^{1/(2\alpha)}=\pi n-\frac{\pi}{2}+O(n^{-1})
\]
is recorded.  The new content of the present spectral argument is therefore not a sharper reference phase, but the transfer of the leading constant, the optimal Schatten threshold and the weak endpoint to every invertible self-adjoint boundary plane through the rank-two inverse correction.

Finally, the positive reference extension is used as the spatial operator in a Caputo-time problem
\begin{equation}\label{EQ:Anomalous-Diffusion-00}
{}^C D_{0+,t}^{\beta}u(t)+Au(t)=f(t),
        \qquad 0<\beta<1.
\end{equation}
The mild solution is stated by spectral expansion, following the standard fractional-evolution and eigenfunction-expansion approach in \cite{Baz01,SY11}.

\section{Preliminaries and a graph-domain lemma}\label{Properties}

Throughout the paper the Hilbert space is $\Hspace$, with inner product
\[
        (u,v)=\int_0^1u(x)\overline{v(x)}\,dx,
\]
linear in the first argument.  For $p>0$, $S_p$ denotes the Schatten--von Neumann ideal; for $0<p<1$ it is understood in the standard quasi-normed sense.  We also write $S_{p,\infty}$ for the weak Schatten ideal, defined by $\sup_{n\ge1} n^{1/p}s_n(A)<\infty$.  We use the standard trace-ideal notation and singular-value inequalities from \cite{GK69,Sim05}.

We use standard facts on Riemann--Liouville integrals and derivatives, including the semigroup property and the left--right integration-by-parts formula, from \cite{SKM93,N03,KST06}.

\begin{defi}
Let $f$ be a function on $[0,1]$ and let $\alpha>0$.  The left and right Riemann--Liouville fractional integrals are
\[
I_0^\alpha f(t)=\frac1{\Gamma(\alpha)}\int_0^t(t-s)^{\alpha-1}f(s)\,ds,
\qquad
I_1^\alpha f(t)=\frac1{\Gamma(\alpha)}\int_t^1(s-t)^{\alpha-1}f(s)\,ds.
\]
\end{defi}

\begin{defi}
For $0<\alpha<1$, the left and right Riemann--Liouville derivatives are
\[
D_0^\alpha f=\frac{d}{dt}I_0^{1-\alpha}f,
        \qquad
D_1^\alpha f=-\frac{d}{dt}I_1^{1-\alpha}f.
\]
The corresponding Caputo derivatives are
\[
\mathcal D_0^\alpha f=D_0^\alpha[f-f(0)],
        \qquad
\mathcal D_1^\alpha f=D_1^\alpha[f-f(1)],
\]
whenever the endpoint values and the displayed expressions are defined.
\end{defi}

Unless a classical representative is explicitly specified,
Riemann--Liouville derivatives are understood distributionally on $(0,1)$.
Endpoint values are used only for the continuous or $H^1$ representatives
constructed below; no endpoint value is assigned to a generic $L^2$ function.

\begin{property}[Adjointness of the left and right fractional integrals]\label{l6}
For $u,v\in L^2(0,1)$ and $\beta>0$,
\begin{equation}\label{IBP-integrals}
        (I_1^\beta u,v)=(u,I_0^\beta v).
\end{equation}
\end{property}
This is the standard left--right duality relation for Riemann--Liouville fractional integrals; it follows by Fubini's theorem and is recorded, for example, in \cite{SKM93,KST06}.

The following Riemann--Liouville inversion identities are used repeatedly
below; see \cite{SKM93,KST06}:
\begin{equation}\label{fractional-basic-identities}
        D_0^\alpha I_0^\alpha f=f,
        \qquad
        D_1^\alpha I_1^\alpha f=f
\end{equation}
in the distributional sense whenever $f\in L^2(0,1)$.  In particular,
\begin{equation}\label{T-identity}
        D_0^\alpha I_0^\alpha I_1^\alpha f=I_1^\alpha f.
\end{equation}
The corresponding Caputo identity requires an endpoint value and will be used
only after the restriction $\alpha>1/2$ and the endpoint estimate
\eqref{right-integral-endpoint-estimate} have been established.

The next elementary lemma will be used three times.

\begin{lemma}[Canonical graph lemma]\label{abstract-graph-lemma}
Let $H$ be a Hilbert space, let $T\in\mathcal B(H)$, and let $\phi_0,\phi_1\in H$ be linearly independent.  Assume that every element of
\[
        \mathfrak D=\{Tf+C_0\phi_0+C_1\phi_1:
        f\in H,\ C_0,C_1\in\C\}
\]
has a unique such representation.  Define
\[
        A(Tf+C_0\phi_0+C_1\phi_1)=f.
\]
Then the graph norm of $A$ is equivalent to the product norm of $H\oplus\C^2$ under the representation map.  In particular, $A$ is closed and $(\mathfrak D,\|\cdot\|_A)$ is a Hilbert space.  Every linear functional depending continuously on $f,C_0,C_1$ is bounded in the graph norm.
\end{lemma}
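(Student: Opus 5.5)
The plan is to look at the representation map
\[
        R:H\oplus\C^2\to\mathfrak D,\qquad R(f,C_0,C_1)=Tf+C_0\phi_0+C_1\phi_1 .
\]
By the uniqueness hypothesis, $R$ is a linear bijection onto $\mathfrak D$. It is also bounded, with
\[
        \|R(f,C)\|\le\|T\|\,\|f\|+|C_0|\,\|\phi_0\|+|C_1|\,\|\phi_1\|\lesssim\|(f,C)\|.
\]
Write $u=R(f,C)$. The graph norm is $\|u\|_A^2=\|u\|^2+\|f\|^2$. The estimate $\|u\|_A\lesssim\|(f,C)\|$ then follows at once from the bound on $R$. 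The whole proof therefore reduces to the reverse estimate
\[
        |C_0|+|C_1|\lesssim\|u\|+\|f\|.
\]

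To get the reverse estimate, I would argue as follows. Put $V=\operatorname{span}\{\phi_0,\phi_1\}$, which is two-dimensional. I claim $\operatorname{ran}T\cap V=\{0\}$. Indeed, if $Tf=C_0\phi_0+C_1\phi_1$, then the element $Tf$ has the two representations $(f,0,0)$ and $(0,C_0,C_1)$. Uniqueness forces $f=0$ and $C_0=C_1=0$, so $Tf=0$. (Uniqueness also shows that $T$ is injective.)

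Now let $Q:H\to H/\overline{\operatorname{ran}T}$ be the quotient map. The key bound is obtained from
\[
        u-Tf=C_0\phi_0+C_1\phi_1,\qquad \|C_0\phi_0+C_1\phi_1\|\le\|u\|+\|T\|\,\|f\|,
\]
together with the norm equivalence on the finite-dimensional space $V$, which gives $|C_0|+|C_1|\lesssim\|C_0\phi_0+C_1\phi_1\|$ by linear independence. Note that no quotient or closure of $\operatorname{ran}T$ is needed for this: $C_0\phi_0+C_1\phi_1=u-Tf$ is controlled directly by $\|u\|$ and $\|f\|$. So the trivial-looking route works, and the main obstacle I anticipated, namely that $\operatorname{ran}T$ need not be closed, disappears. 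This gives the norm equivalence $\|u\|_A\asymp\|(f,C_0,C_1)\|_{H\oplus\C^2}$.

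The remaining statements follow from the equivalence.
\begin{itemize}
\item[(i)] $(\mathfrak D,\|\cdot\|_A)$ is isomorphic, as a normed space, to the complete space $H\oplus\C^2$, so it is complete. The graph norm comes from the inner product $(u,v)+(Au,Av)$, so it is a Hilbert space.
\item[(ii)] A graph-complete operator is closed, so $A$ is closed.
\item[(iii)] A functional $\ell(u)=\lambda(f,C_0,C_1)$ with $\lambda$ continuous on $H\oplus\C^2$ satisfies
\[
        |\ell(u)|\le\|\lambda\|\,\|R^{-1}u\|\lesssim\|u\|_A,
\]
so it is bounded in the graph norm.
\end{itemize}
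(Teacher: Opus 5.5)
Your proof is correct and is essentially the paper's argument: the upper bound is immediate, the reverse bound comes from $\|C_0\phi_0+C_1\phi_1\|\le\|u\|+\|T\|\,\|f\|$ together with norm equivalence on the two-dimensional span, and completeness, closedness and boundedness of the functionals then follow from the isomorphism with $H\oplus\C^2$. The detour through $\operatorname{ran}T\cap V=\{0\}$ and the quotient map is, as you noticed yourself, unnecessary and can simply be deleted.
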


\begin{proof}
The upper estimate
\[
 \|Tf+C_0\phi_0+C_1\phi_1\|^2+\|f\|^2
 \le C\bigl(\|f\|^2+|C_0|^2+|C_1|^2\bigr)
\]
is immediate.  Since $\phi_0$ and $\phi_1$ are linearly independent, all norms on their span are equivalent; hence
\[
 |C_0|+|C_1|
 \le C\|C_0\phi_0+C_1\phi_1\|
 \le C\bigl(\|Tf+C_0\phi_0+C_1\phi_1\|+\|T\|\|f\|\bigr).
\]
Together with the graph-norm term $\|f\|$, this gives the reverse estimate.  Completeness follows from the completeness of $H\oplus\C^2$, and closedness of $A$ is equivalent to completeness of its graph domain.
\end{proof}

\section{The power-kernel graph realization and Green's formula}\label{sec:power-graph}

For the remainder of Sections \ref{sec:power-graph}--\ref{sec:comparison-previous} we assume
\[
        \frac12<\alpha<1
\]
and consider the formal expression
\begin{equation}\label{FDO-in-L2}
        \mathcal L u=\mathcal D_1^\alpha D_0^\alpha u.
\end{equation}
Set
\begin{equation}\label{T-and-modes-power}
        Tf:=I_0^\alpha I_1^\alpha f,
        \qquad
        \phi_0(x):=\frac{x^{\alpha-1}}{\Gamma(\alpha)},
        \qquad
        \phi_1(x):=\frac{x^\alpha}{\Gamma(\alpha+1)}.
\end{equation}
Then $\phi_0,\phi_1\in L^2(0,1)$.  The standard power-function formulas for Riemann--Liouville derivatives \cite{SKM93,KST06} give
\begin{equation}\label{power-homogeneous-identities}
        D_0^\alpha\phi_0=0,
        \qquad
        D_0^\alpha\phi_1=1,
        \qquad
        \mathcal L\phi_0=\mathcal L\phi_1=0.
\end{equation}
For $f\in L^2(0,1)$ define
\begin{equation}\label{power-moments}
        A_f:=\frac1{\Gamma(\alpha)}\int_0^1s^{\alpha-1}f(s)\,ds,
        \qquad
        B_f:=\frac1{\Gamma(\alpha+1)}\int_0^1s^\alpha f(s)\,ds.
\end{equation}
The assumption $\alpha>1/2$ makes both functionals continuous on $L^2(0,1)$ by Cauchy--Schwarz.
For later use we record that, for every $f\in L^2(0,1)$, $I_1^\alpha f$ has a continuous representative on $[0,1]$ and its canonical right endpoint value is zero.  Indeed, after zero extension and reflection it is a convolution of two $L^2(\mathbb R)$ functions, hence continuous by the elementary $L^2*L^2\subset C_0$ convolution theorem; moreover, for $0\le x<1$,
\begin{equation}\label{right-integral-endpoint-estimate}
 |I_1^\alpha f(x)|
 \le \frac{\|f\|_2}{\Gamma(\alpha)}
      \left(\int_x^1(s-x)^{2\alpha-2}\,ds\right)^{1/2}
 =\frac{(1-x)^{\alpha-1/2}}{\Gamma(\alpha)\sqrt{2\alpha-1}}\,\|f\|_2
 \to0
\end{equation}
as $x\uparrow1$.
Consequently, in the range $\alpha>1/2$ used below,
\begin{equation}\label{right-Caputo-inversion}
 \mathcal D_1^\alpha I_1^\alpha f
 =D_1^\alpha\bigl[I_1^\alpha f-(I_1^\alpha f)(1)\bigr]
 =D_1^\alpha I_1^\alpha f=f,
 \qquad f\in L^2(0,1).
\end{equation}

\begin{rem}[Sharpness of the range $\alpha>1/2$]\label{sharp-alpha-range}
The assumption $\alpha>1/2$ is not a technical convenience.  It is exactly the condition under which
\[
        x^{\alpha-1}\in L^2(0,1)
\]
and also exactly the condition under which the moment functional
\[
        f\mapsto\int_0^1s^{\alpha-1}f(s)\,ds
\]
is bounded on $L^2(0,1)$.  Hence the four-coordinate graph trace used in this paper is specific to the range $\alpha>1/2$.  At and below $\alpha=1/2$, the singular boundary mode leaves $L^2(0,1)$ and the extension theory must be reformulated with a different maximal domain.
\end{rem}

Put
\[
 H^1_{\{1\}}(0,1):=\{w\in H^1(0,1):w(1)=0\}.
\]

\begin{defi}[Endpoint-normalized maximal right-Caputo operator]
\label{maximal-right-caputo-definition}
A function $g\in L^2(0,1)$ belongs to
$\Dom(\mathcal C_{1,\max}^\alpha)$ if there is a constant $c\in\C$ such that
\begin{equation}\label{weak-right-caputo-domain}
 I_1^{1-\alpha}(g-c)\in H^1_{\{1\}}(0,1).
\end{equation}
For such a function set
\begin{equation}\label{weak-right-caputo-action}
 \mathcal C_{1,\max}^\alpha g
 :=-\frac{d}{dx}I_1^{1-\alpha}(g-c).
\end{equation}
The next lemma proves that $c$, and hence the action, is unique.
\end{defi}

\begin{lemma}[Weak right-Caputo inversion]\label{weak-right-caputo-inversion}
Let $1/2<\alpha<1$.  Then
$g\in\Dom(\mathcal C_{1,\max}^\alpha)$ if and only if there are unique
$c\in\C$ and $f\in L^2(0,1)$ such that
\begin{equation}\label{right-caputo-volterra-representation}
 g=c+I_1^\alpha f.
\end{equation}
In that case $g$ has a continuous representative on $[0,1]$ and
\[
 g(1)=c,
 \qquad \mathcal C_{1,\max}^\alpha g=f
 =D_1^\alpha[g-g(1)].
\]
\end{lemma}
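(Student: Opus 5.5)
We prove the two implications separately and then uniqueness.

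\emph{Sufficiency.} Let $g=c+I_1^\alpha f$ with $f\in L^2(0,1)$. By the semigroup property,
\[
I_1^{1-\alpha}(g-c)=I_1^{1-\alpha}I_1^\alpha f=I_1^1 f=\int_x^1 f(s)\,ds .
\]
This function lies in $H^1(0,1)$, vanishes at $x=1$, and has derivative $-f$. Hence $g\in\Dom(\mathcal C_{1,\max}^\alpha)$ and $\mathcal C_{1,\max}^\alpha g=f$. By the endpoint estimate \eqref{right-integral-endpoint-estimate}, $I_1^\alpha f$ is continuous on $[0,1]$ with value $0$ at $1$. So $g$ has a continuous representative with $g(1)=c$, and \eqref{right-Caputo-inversion} gives $D_1^\alpha[g-g(1)]=D_1^\alpha I_1^\alpha f=f$.

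\emph{Necessity.} Let $g\in\Dom(\mathcal C_{1,\max}^\alpha)$ with constant $c$. Put $w=I_1^{1-\alpha}(g-c)\in H^1_{\{1\}}(0,1)$ and $f:=-w'\in L^2(0,1)$. Since $w(1)=0$, we have $w=I_1^1f$. Then
\[
I_1^{1-\alpha}(g-c)=I_1^1 f=I_1^{1-\alpha}I_1^\alpha f,
\qquad\text{so}\qquad
I_1^{1-\alpha}h=0,\quad h:=g-c-I_1^\alpha f\in L^2(0,1).
\]
The key step is injectivity of $I_1^{1-\alpha}$ on $L^2(0,1)$, which gives $h=0$, i.e.\ $g=c+I_1^\alpha f$. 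To prove it, apply $I_1^\alpha$ and use the semigroup property: $I_1^1h=I_1^\alpha I_1^{1-\alpha}h=0$. Thus $\int_x^1h=0$ for all $x$, and differentiation gives $h=0$ a.e. This argument needs only Fubini on $L^1$, so it should be routine.

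\emph{Uniqueness.} Suppose $c+I_1^\alpha f=c'+I_1^\alpha f'$. Both sides are continuous, and evaluating at $x=1$ with $(I_1^\alpha f)(1)=0$ gives $c=c'$. Then $I_1^\alpha(f-f')=0$, and the same injectivity argument (apply $I_1^{1-\alpha}$ to get $I_1^1(f-f')=0$) yields $f=f'$. In particular the constant $c$ in Definition~\ref{maximal-right-caputo-definition} is uniquely determined by $g$. The action \eqref{weak-right-caputo-action} therefore does not depend on choices: any admissible $c$ produces a representation $g=c+I_1^\alpha f$, which by uniqueness forces $c=g(1)$ and $f=\mathcal C_{1,\max}^\alpha g$.

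The main obstacle is modest. It is the uniqueness of $c$, which rests on the continuity and vanishing of $I_1^\alpha f$ at $x=1$. That fact is exactly where $\alpha>1/2$ enters, through \eqref{right-integral-endpoint-estimate}. For $\alpha\le1/2$ a constant could be absorbed into $\operatorname{ran}I_1^\alpha$ and the normalization would fail.
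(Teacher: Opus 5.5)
Your proof is correct and follows the paper's argument: the semigroup identity $I_1^{1-\alpha}I_1^\alpha=I_1^1$, the representation $w=I_1^1f$ for $w\in H^1_{\{1\}}(0,1)$, and injectivity of $I_1^{1-\alpha}$ obtained by applying $I_1^\alpha$. The only variation is in the uniqueness of $c$. You derive it afterwards by evaluating the continuous representative at $x=1$. The paper proves it directly from the definition: $I_1^{1-\alpha}1=(1-x)^{1-\alpha}/\Gamma(2-\alpha)$ has weak derivative a multiple of $(1-x)^{-\alpha}\notin L^2(0,1)$.

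A small correction to your closing remark: at $\alpha=1/2$ exactly, a constant is not absorbed into $\operatorname{ran}I_1^\alpha$, because the would-be preimage $(1-x)^{-1/2}/\Gamma(1/2)$ is not in $L^2(0,1)$. Absorption therefore occurs only for $\alpha<1/2$.
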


\begin{proof}
First, the constant in \eqref{weak-right-caputo-domain} is unique.  If
$c_1$ and $c_2$ are both admissible, then
\[
 I_1^{1-\alpha}(c_2-c_1)
 =\frac{c_2-c_1}{\Gamma(2-\alpha)}(1-x)^{1-\alpha}
 \in H^1(0,1).
\]
For $c_1\ne c_2$, its weak derivative is a nonzero constant multiple of
$(1-x)^{-\alpha}$, which is not in $L^2(0,1)$ because $\alpha>1/2$.
Thus $c_1=c_2$.

Let $w=I_1^{1-\alpha}(g-c)\in H^1_{\{1\}}(0,1)$ and put
$f=-w'\in L^2(0,1)$.  Since $w(1)=0$, the fundamental theorem for $H^1$
and the semigroup property give
\[
 w(x)=\int_x^1f(s)\,ds=I_1^1f(x)
     =I_1^{1-\alpha}I_1^\alpha f(x).
\]
The operator $I_1^{1-\alpha}$ is injective on $L^2(0,1)$: applying
$I_1^\alpha$ to $I_1^{1-\alpha}h=0$ gives $I_1^1h=0$, and
distributional differentiation gives $h=0$.  Hence
$g-c=I_1^\alpha f$.

Conversely, if $g=c+I_1^\alpha f$, then
\[
 I_1^{1-\alpha}(g-c)=I_1^1f\in H^1_{\{1\}}(0,1),
 \qquad -\frac{d}{dx}I_1^1f=f.
\]
The continuity and endpoint assertion follow from
\eqref{right-integral-endpoint-estimate}; uniqueness of $f$ follows from
the injectivity of $I_1^\alpha$.
\end{proof}

\begin{defi}[Distributional graph domain]\label{power-maximal-domain-definition}
For $u\in L^2(0,1)$ put $F=I_0^{1-\alpha}u$.  Define
\begin{equation}\label{distributional-graph-domain}
 \Dmax:=\left\{u\in L^2(0,1):
 F\in H^1(0,1),\quad
 F'\in\Dom(\mathcal C_{1,\max}^\alpha)\right\}.
\end{equation}
On this domain set
\begin{equation}\label{power-graph-action}
 \Lmax u:=\mathcal C_{1,\max}^\alpha F'.
\end{equation}
Thus \eqref{distributional-graph-domain} is defined directly from weak
Riemann--Liouville differentiation and the endpoint-normalized right-Caputo
graph, not from a Volterra inverse formula.
\end{defi}

\begin{thm}[Graph representation and range characterization]
\label{power-natural-domain-characterization}
The distributional graph domain has the representation
\begin{equation}\label{Dmax-def}
 \Dmax=\left\{u=Tf+C_0\phi_0+C_1\phi_1:
 f\in L^2(0,1),\ C_0,C_1\in\C\right\}.
\end{equation}
The representation is unique and
\begin{equation}\label{power-natural-action}
 \Lmax(Tf+C_0\phi_0+C_1\phi_1)=f.
\end{equation}
Equivalently,
\begin{equation}\label{power-natural-domain}
\begin{aligned}
\Dmax=\mathcal X_\alpha:=\bigl\{u\in L^2(0,1):{}&
F:=I_0^{1-\alpha}u\in H^1(0,1),\\
&F'-c\in\operatorname{ran}I_1^\alpha
\text{ for some }c\in\C\bigr\}.
\end{aligned}
\end{equation}
If $F'-c=I_1^\alpha f$, then $c$ and $f$ are unique and
\begin{equation}\label{power-natural-representation}
 u=I_0^\alpha I_1^\alpha f+F(0)\phi_0+c\phi_1.
\end{equation}
In this representation, $F'$ has a continuous representative,
$c=F'(1)$, and $f=\Lmax u$.
\end{thm}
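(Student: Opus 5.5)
The plan is to reduce everything to Lemma~\ref{weak-right-caputo-inversion} together with a left-sided analogue for $I_0^{1-\alpha}$. By Lemma~\ref{weak-right-caputo-inversion}, the condition $F'\in\Dom(\mathcal C_{1,\max}^\alpha)$ holds if and only if $F'=c+I_1^\alpha f$ for unique $c\in\C$ and $f\in L^2(0,1)$. In that case $F'$ has a continuous representative with $F'(1)=c$, and $\mathcal C_{1,\max}^\alpha F'=f$. This already gives $\Dmax=\mathcal X_\alpha$, the uniqueness of $c$ and $f$, the continuity statement, $c=F'(1)$ and $f=\Lmax u$. What remains is to identify $u$ from $F$.

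\textbf{Step 1 (inclusion $\Dmax\subset$ right-hand side of \eqref{Dmax-def}).} Let $u\in\Dmax$. Since $F\in H^1(0,1)$, we have
\[
 F(x)=F(0)+\int_0^xF'=F(0)+I_0^1\bigl(c+I_1^\alpha f\bigr)(x).
\]
Put $v:=Tf+F(0)\phi_0+c\phi_1$. The semigroup property and the power formulas give
\[
 I_0^{1-\alpha}Tf=I_0^1I_1^\alpha f,\qquad I_0^{1-\alpha}\phi_0=1,\qquad I_0^{1-\alpha}\phi_1=x .
\]
Hence $I_0^{1-\alpha}v=F(0)+cx+I_0^1I_1^\alpha f=F=I_0^{1-\alpha}u$. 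Injectivity of $I_0^{1-\alpha}$ on $L^2(0,1)$ then yields $u=v$; this is proved exactly as in Lemma~\ref{weak-right-caputo-inversion}, by applying $I_0^\alpha$ and differentiating. This argument also establishes \eqref{power-natural-representation}.

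\textbf{Step 2 (reverse inclusion and action).} Take $u=Tf+C_0\phi_0+C_1\phi_1$ with $f\in L^2(0,1)$. The same computation gives
\[
 F=C_0+C_1x+I_0^1I_1^\alpha f .
\]
Since $I_1^\alpha f\in L^2(0,1)$, in fact continuous by \eqref{right-integral-endpoint-estimate}, we get $F\in H^1(0,1)$ with $F'=C_1+I_1^\alpha f$. Lemma~\ref{weak-right-caputo-inversion} then shows $F'\in\Dom(\mathcal C_{1,\max}^\alpha)$ and $\Lmax u=f$, which proves \eqref{power-natural-action}. Reading off $F(0)=C_0$ and $F'(1)=C_1$ is consistent with Step 1.

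\textbf{Step 3 (uniqueness of the representation).} Suppose $Tf+C_0\phi_0+C_1\phi_1=0$. Applying $I_0^{1-\alpha}$ gives
\[
 C_0+C_1x+I_0^1I_1^\alpha f=0 .
\]
Evaluating at $x=0$ gives $C_0=0$. Differentiating gives $C_1+I_1^\alpha f=0$, and evaluating the continuous representative at $x=1$ via \eqref{right-integral-endpoint-estimate} gives $C_1=0$. Finally, injectivity of $I_1^\alpha$ gives $f=0$.

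The main obstacle is the bookkeeping of representatives rather than any deep estimate. One must justify $I_0^{1-\alpha}\phi_0=1$ as an identity in $L^2$; this holds because $\phi_0\in L^1$ and the Beta integral gives it pointwise. One must also justify that the $H^1$ representative of $F$ agrees with $C_0+C_1x+I_0^1I_1^\alpha f$, and that $F(0)$ is well defined. Both points follow since all functions involved are continuous on $[0,1]$ and the identities hold almost everywhere. Injectivity of $I_0^{1-\alpha}$ should be stated explicitly, by mirroring the right-sided argument.
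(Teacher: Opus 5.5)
Your proof is correct and follows essentially the paper's argument: both directions use Lemma~\ref{weak-right-caputo-inversion} for $F'$, integration of the $H^1$ function $F$ with $C_0=F(0)$, and injectivity of $I_0^{1-\alpha}$ to recover $u$ from $F$. The only cosmetic difference is in uniqueness: you compute the kernel directly (evaluating at $x=0$ and $x=1$, then using injectivity of $I_1^\alpha$), whereas the paper reads off $C_0=F(0)$ and invokes the uniqueness part of the lemma for $C_1$ and $f$.
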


\begin{proof}
Suppose first that
$u=Tf+C_0\phi_0+C_1\phi_1$.  The semigroup property gives
\[
 F=I_0^{1-\alpha}u=C_0+C_1x+I_0^1I_1^\alpha f,
 \qquad
 F'=C_1+I_1^\alpha f.
\]
Lemma \ref{weak-right-caputo-inversion} shows that
$F'\in\Dom(\mathcal C_{1,\max}^\alpha)$ and
$\mathcal C_{1,\max}^\alpha F'=f$.  Thus $u\in\Dmax$ and
\eqref{power-natural-action} holds.

Conversely, let $u\in\Dmax$.  Lemma
\ref{weak-right-caputo-inversion} gives unique $C_1\in\C$ and
$f\in L^2(0,1)$ such that
\[
 F'=C_1+I_1^\alpha f,
 \qquad \Lmax u=f.
\]
Writing $C_0=F(0)$ and integrating $F'$ now gives
\[
 F(x)=C_0+C_1x+I_0^1I_1^\alpha f(x).
\]
This is $I_0^{1-\alpha}\widetilde u$, where
$\widetilde u=Tf+C_0\phi_0+C_1\phi_1$.  Injectivity of
$I_0^{1-\alpha}$, proved in the same way, yields $u=\widetilde u$.

The coefficients are unique: $C_0=F(0)$, while $C_1$ and $f$ are uniquely
determined by Lemma \ref{weak-right-caputo-inversion}.  This proves
\eqref{Dmax-def} and
\eqref{power-natural-action}.  The representation immediately implies
$\Dmax\subset\mathcal X_\alpha$.  Conversely, if
$F'-c=I_1^\alpha f$, then Lemma
\ref{weak-right-caputo-inversion} gives
$F'\in\Dom(\mathcal C_{1,\max}^\alpha)$, so
$u\in\Dmax$.
This proves \eqref{power-natural-domain} and
\eqref{power-natural-representation}.
\end{proof}

By Theorem \ref{power-natural-domain-characterization} and Lemma
\ref{abstract-graph-lemma}, $\Lmax$ is closed and
\begin{equation}\label{power-graph-norm-equivalence}
 \|u\|_{\Dmax}^2:=\|u\|_2^2+\|\Lmax u\|_2^2
 \asymp \|f\|_2^2+|C_0|^2+|C_1|^2.
\end{equation}

For $u=Tf+C_0\phi_0+C_1\phi_1\in\Dmax$ define
\begin{equation}\label{traces-minus}
        \xi_1^-(u):=C_0,
        \qquad
        \xi_2^-(u):=C_0+C_1+B_f,
\end{equation}
and
\begin{equation}\label{traces-plus}
        \xi_1^+(u):=C_1+A_f,
        \qquad
        \xi_2^+(u):=C_1.
\end{equation}
Write
\begin{equation}\label{Xi-def}
        \Xiop u:=\bigl(\xi_1^-(u),\xi_2^-(u),
        \xi_1^+(u),\xi_2^+(u)\bigr)^T\in\C^4.
\end{equation}
By \eqref{power-graph-norm-equivalence}, $\Xiop:(\Dmax,\|\cdot\|_{\Dmax})\to\C^4$ is bounded.

These coordinates are canonical endpoint traces.  More precisely, define
\begin{equation}\label{fractional-canonical-trace-representatives}
\begin{split}
        U^-(x)&:=C_0+C_1x+\int_0^x(I_1^\alpha f)(r)\,dr,\\
        U^+(x)&:=C_1+(I_1^\alpha f)(x).
\end{split}
\end{equation}
Then $U^-=I_0^{1-\alpha}u$ and $U^+=D_0^\alpha u$ in the distributional sense.
By the endpoint observation above, $I_1^\alpha f$ is continuous on $[0,1]$ and vanishes at $x=1$.  Hence
\begin{equation}\label{fractional-traces-as-endpoint-values}
        \xi_1^-(u)=U^-(0),\quad
        \xi_2^-(u)=U^-(1),\quad
        \xi_1^+(u)=U^+(0),\quad
        \xi_2^+(u)=U^+(1).
\end{equation}
In particular, the notation $I_0^{1-\alpha}u(0)$ below means the canonical trace limit $U^-(0)$ and not a zero-length integral.

\begin{lemma}\label{trace-surjectivity}
The trace map $\Xiop:\Dmax\to\C^4$ is onto.
\end{lemma}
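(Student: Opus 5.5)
The trace map is linear, so the plan is to reduce surjectivity onto $\C^4$ to an explicit solvability statement in the coordinates $(f,C_0,C_1)$. By Theorem \ref{power-natural-domain-characterization}, every $u\in\Dmax$ has a unique representation $u=Tf+C_0\phi_0+C_1\phi_1$ with $f\in L^2(0,1)$ and $C_0,C_1\in\C$. In these coordinates \eqref{traces-minus}--\eqref{traces-plus} give
\[
 \Xiop u=\bigl(C_0,\;C_0+C_1+B_f,\;C_1+A_f,\;C_1\bigr)^T.
\]
Given an arbitrary target $(a_1,a_2,b_1,b_2)^T\in\C^4$, I would read off $C_0:=a_1$ and $C_1:=b_2$ from the first and last coordinates. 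The remaining two conditions then become moment conditions on $f$:
\[
 A_f=b_1-b_2,\qquad B_f=a_2-a_1-b_2.
\]
So it suffices to show that the linear map $f\mapsto(A_f,B_f)$ from $L^2(0,1)$ to $\C^2$ is onto.

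This is the only step with real content, and it is elementary. By \eqref{power-moments}, $A_f=(f,\overline{\psi_0})$ and $B_f=(f,\overline{\psi_1})$, where $\psi_0=s^{\alpha-1}/\Gamma(\alpha)$ and $\psi_1=s^{\alpha}/\Gamma(\alpha+1)$. Both functions lie in $L^2(0,1)$ because $\alpha>1/2$, and they are linearly independent, since they are distinct powers of $s$. I would restrict $f$ to $\operatorname{span}\{\psi_0,\psi_1\}$. The map $(f,\psi_j)$ restricted to this span is represented by the Gram matrix of $\psi_0,\psi_1$. That matrix is invertible because the two functions are linearly independent, so any prescribed pair $(A_f,B_f)$ is attained. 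If a fully explicit preimage is wanted, the Gram entries are the integrals $\int_0^1 s^{2\alpha-2}\,ds=1/(2\alpha-1)$, $\int_0^1 s^{2\alpha-1}\,ds=1/(2\alpha)$ and $\int_0^1 s^{2\alpha}\,ds=1/(2\alpha+1)$, divided by the appropriate Gamma factors. Their determinant is positive by Cauchy--Schwarz, with strict inequality by independence.

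Choosing such an $f$, together with $C_0=a_1$ and $C_1=b_2$, produces $u=Tf+C_0\phi_0+C_1\phi_1\in\Dmax$ with $\Xiop u=(a_1,a_2,b_1,b_2)^T$, which proves surjectivity. I expect no genuine obstacle here. The one point to check carefully is that the moment functionals are bounded and linearly independent on $L^2(0,1)$, and this is exactly where the hypothesis $\alpha>1/2$ from Remark \ref{sharp-alpha-range} enters.
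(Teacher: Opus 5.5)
Your proposal is correct and matches the paper's proof. You fix $C_0$ and $C_1$ from the first and fourth trace coordinates, which reduces the lemma to surjectivity of $f\mapsto(A_f,B_f)$; that follows from the linear independence of the Riesz representers $s^{\alpha-1}$ and $s^{\alpha}$. Your Gram-matrix computation simply makes that last step explicit.
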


\begin{proof}
Given $(a,b,c,d)\in\C^4$, choose $C_0=a$ and $C_1=d$.  It remains to choose $f$ such that
\[
        A_f=c-d,
        \qquad
        B_f=b-a-d.
\]
The Riesz representers of $A_f$ and $B_f$ are constant multiples of
$s^{\alpha-1}$ and $s^\alpha$, which are linearly independent in $L^2(0,1)$.
Thus $f\mapsto(A_f,B_f)$ is onto $\C^2$.
\end{proof}

\begin{lemma}[Green formula]\label{green-formula}
For $u,v\in\Dmax$,
\begin{align}\label{eq01-corrected}
(\Lmax u,v)-(u,\Lmax v)
={}&\xi_2^-(u)\overline{\xi_2^+(v)}
      -\xi_2^+(u)\overline{\xi_2^-(v)}       \notag\\
&-\xi_1^-(u)\overline{\xi_1^+(v)}
      +\xi_1^+(u)\overline{\xi_1^-(v)}.
\end{align}
Equivalently, in terms of the canonical representatives,
\begin{equation}\label{green-bracket-form}
(\Lmax u,v)-(u,\Lmax v)
=
\left[
(I_0^{1-\alpha}u)\overline{D_0^\alpha v}
-(D_0^\alpha u)\overline{I_0^{1-\alpha}v}
\right]_{0}^{1}.
\end{equation}
\end{lemma}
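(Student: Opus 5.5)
Since both sides of the Green formula are sesquilinear in $(u,v)$ and, by \eqref{power-graph-norm-equivalence}, continuous in the graph norm, we can verify it directly in the coordinates of Theorem \ref{power-natural-domain-characterization}. We write $u=Tf+C_0\phi_0+C_1\phi_1$ and $v=Tg+E_0\phi_0+E_1\phi_1$, so that $\Lmax u=f$ and $\Lmax v=g$. Then
\[
(\Lmax u,v)-(u,\Lmax v)=(f,Tg)-(Tf,g)+\bigl[E\text{-terms}\bigr]-\bigl[C\text{-terms}\bigr].
\]
The Volterra–Volterra part cancels. By Property \ref{l6}, $T=I_0^\alpha I_1^\alpha=(I_1^\alpha)^*I_1^\alpha$ is bounded and self-adjoint, so $(f,Tg)=(Tf,g)$.

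The remaining terms are
\[
(f,\overline{E_0}\phi_0+\overline{E_1}\phi_1)=\overline{E_0}A_f+\overline{E_1}B_f,
\]
by the definitions \eqref{power-moments}, together with the conjugate-symmetric counterpart $-(C_0\phi_0+C_1\phi_1,g)=-C_0\overline{A_g}-C_1\overline{B_g}$. The left-hand side is therefore
\[
\overline{E_0}A_f+\overline{E_1}B_f-C_0\overline{A_g}-C_1\overline{B_g}.
\]
Now substitute the traces \eqref{traces-minus}–\eqref{traces-plus} into the right-hand side of \eqref{eq01-corrected}:
\[
(C_0+C_1+B_f)\overline{E_1}-C_1\overline{(E_0+E_1+B_g)}-C_0\overline{(E_1+A_g)}+(C_1+A_f)\overline{E_0}.
\]
Expanding this, the terms $C_0\overline{E_1}$, $C_1\overline{E_1}$ and $C_1\overline{E_0}$ each appear once with each sign and cancel. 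What survives is $B_f\overline{E_1}-C_1\overline{B_g}-C_0\overline{A_g}+A_f\overline{E_0}$, which matches the left-hand side exactly. This bookkeeping is the only real check; the sign conventions in \eqref{traces-minus} were presumably chosen to make it close, so the main risk is an algebra slip.

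The bracket form \eqref{green-bracket-form} then follows from \eqref{fractional-traces-as-endpoint-values}. Using $U^-=I_0^{1-\alpha}u$ and $U^+=D_0^\alpha u$, the bracket $[U^-_u\overline{U^+_v}-U^+_u\overline{U^-_v}]_0^1$ evaluated at $x=1$ gives $\xi_2^-(u)\overline{\xi_2^+(v)}-\xi_2^+(u)\overline{\xi_2^-(v)}$. At $x=0$, with the minus sign, it gives $-\xi_1^-(u)\overline{\xi_1^+(v)}+\xi_1^+(u)\overline{\xi_1^-(v)}$. This is exactly \eqref{eq01-corrected}.

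The main subtlety is conceptual rather than computational: the endpoint values must be understood as those of the continuous canonical representatives $U^\pm$. This is guaranteed by the continuity and vanishing at $1$ of $I_1^\alpha f$ from \eqref{right-integral-endpoint-estimate}, which gives $U^+(1)=C_1$ and $U^-(1)=C_0+C_1+B_f$. The last equality uses $\int_0^1 I_1^\alpha f=(f,I_0^\alpha 1)=B_f$, by Property \ref{l6} together with $I_0^\alpha 1=x^\alpha/\Gamma(\alpha+1)$. Similarly $U^+(0)=C_1+(I_1^\alpha f)(0)=C_1+A_f$, since $(I_1^\alpha f)(0)=\Gamma(\alpha)^{-1}\int_0^1 s^{\alpha-1}f(s)\,ds=A_f$.
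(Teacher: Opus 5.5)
Your proof is correct and is essentially the paper's argument: self-adjointness of $T=(I_1^\alpha)^*I_1^\alpha$ cancels the Volterra terms, the left side reduces to $A_f\overline{E_0}+B_f\overline{E_1}-C_0\overline{A_g}-C_1\overline{B_g}$, you substitute the traces, and the bracket form follows from \eqref{fractional-traces-as-endpoint-values}. Your opening continuity remark and your re-derivation of $U^\pm(0),U^\pm(1)$ are harmless but not needed, and there is one cosmetic slip: since the inner product is conjugate-linear in the second slot, the term should read $(f,E_0\phi_0+E_1\phi_1)$ rather than $(f,\overline{E_0}\phi_0+\overline{E_1}\phi_1)$, although the value $\overline{E_0}A_f+\overline{E_1}B_f$ you record is right.
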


\begin{proof}
Write $u=Tf+C_0\phi_0+C_1\phi_1$ and
$v=Tg+E_0\phi_0+E_1\phi_1$.  By \eqref{IBP-integrals},
$T=I_0^\alpha I_1^\alpha=(I_1^\alpha)^*I_1^\alpha$ is self-adjoint.  Therefore the $T$ terms cancel and
\[
(\Lmax u,v)-(u,\Lmax v)
=A_f\overline{E_0}+B_f\overline{E_1}
-C_0\overline{A_g}-C_1\overline{B_g}.
\]
Substitution of \eqref{traces-minus}--\eqref{traces-plus} gives
\eqref{eq01-corrected}; formula \eqref{green-bracket-form} follows from
\eqref{fractional-traces-as-endpoint-values}.
\end{proof}

Define the boundary-minimal, or zero-trace, operator by
\begin{equation}\label{minimal-operator}
        \Dom(\Lmin):=\{u\in\Dmax:\Xiop u=0\},
        \qquad
        \Lmin u:=\Lmax u.
\end{equation}
Because $\Xiop$ is graph-norm bounded, $\Lmin$ is a closed restriction of the closed operator $\Lmax$.  The Green formula shows that it is symmetric.  Here
``boundary-minimal'' means the zero-trace restriction of the graph operator;
no assertion is made that this domain is the closure of a separately chosen
$C_c^\infty(0,1)$ preminimal domain.

\begin{lemma}\label{minimal-adjoint-lemma}
The operator $\Lmin$ is densely defined and
\begin{equation}\label{adjoint-relation}
        \Lmin^*=\Lmax.
\end{equation}
\end{lemma}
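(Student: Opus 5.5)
I will work entirely in the coordinates $u=Tf+C_0\phi_0+C_1\phi_1$ of Theorem \ref{power-natural-domain-characterization}. By \eqref{traces-minus}--\eqref{traces-plus}, $\Xiop u=0$ means exactly $C_0=C_1=0$ and $A_f=B_f=0$. So
\[
 \Dom(\Lmin)=\{Tf: f\perp s^{\alpha-1},\ f\perp s^\alpha\},
\]
and $\Lmin(Tf)=f$.

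\textbf{Density.} Suppose $h\perp\Dom(\Lmin)$. Then $(Tf,h)=(f,Th)=0$ for every $f\in\{s^{\alpha-1},s^\alpha\}^\perp$, using the self-adjointness of $T$ from Lemma \ref{green-formula}. Hence $Th\in\operatorname{span}\{s^{\alpha-1},s^\alpha\}$.

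Now apply $D_0^\alpha$ using \eqref{T-identity} and \eqref{power-homogeneous-identities}. Since $D_0^\alpha s^{\alpha-1}=0$ and $D_0^\alpha s^\alpha$ is a constant, we get $I_1^\alpha h=$ const. Evaluating at $x=1$ via \eqref{right-integral-endpoint-estimate} shows the constant is $0$, so $I_1^\alpha h=0$. Injectivity of $I_1^\alpha$ then gives $h=0$.

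\textbf{Inclusion $\Lmax\subset\Lmin^*$.} For $u\in\Dmax$ and $v\in\Dom(\Lmin)$, apply the Green formula \eqref{eq01-corrected}. Every term contains a trace of $v$, and these all vanish. Hence $(\Lmin v,u)=(v,\Lmax u)$, which is the claimed inclusion.

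\textbf{Reverse inclusion $\Lmin^*\subset\Lmax$.} This is the main step. Let $w\in\Dom(\Lmin^*)$ and put $g=\Lmin^*w\in L^2$. Define $\widetilde w:=Tg\in\Dmax$, so that $\Lmax\widetilde w=g$. By the inclusion just proved, $\widetilde w\in\Dom(\Lmin^*)$ with $\Lmin^*\widetilde w=g$. Therefore $z=w-\widetilde w$ satisfies $\Lmin^*z=0$, that is,
\[
(f,z)=(\Lmin Tf,z)=0 \quad\text{for all } f\in\{s^{\alpha-1},s^\alpha\}^\perp.
\]
Hence $z\in\operatorname{span}\{s^{\alpha-1},s^\alpha\}=\operatorname{span}\{\phi_0,\phi_1\}$, and this span lies in $\Dmax$. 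Consequently $w=Tg+C_0\phi_0+C_1\phi_1\in\Dmax$. Since $\Lmax$ kills $\phi_0,\phi_1$, we get $\Lmax w=g$, which gives $\Lmin^*\subset\Lmax$.

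\textbf{Where the difficulty sits.} The key coincidence is that $\ker\Lmax=\operatorname{span}\{\phi_0,\phi_1\}$ equals, up to constants, the span of the Riesz representers $s^{\alpha-1},s^\alpha$ of the moment functionals $A_f,B_f$. This identity is exactly what makes $\operatorname{ran}\Lmin^\perp=\ker\Lmax$. Besides that, the only point needing care is the density step: there one must justify that $D_0^\alpha$ annihilates $\phi_0$ and maps $\phi_1$ to a constant, and that $I_1^\alpha h$ vanishes at $1$ in the sense of its continuous representative.
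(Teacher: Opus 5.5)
Your proof is correct and follows the paper's argument essentially step for step: you identify $\Dom(\Lmin)=\{Tf: f\in N\}$ with $N=\{s^{\alpha-1},s^\alpha\}^\perp$, get density from $Th\in N^\perp$, derive $\Lmax\subset\Lmin^*$ from the Green formula, and obtain the reverse inclusion from $N^\perp=\operatorname{span}\{\phi_0,\phi_1\}$. The only difference is cosmetic and lies in the density step: you kill $h$ by applying $D_0^\alpha$ and evaluating $I_1^\alpha h$ at $x=1$, whereas the paper compares the trace vectors $\Xiop(Th)$ and $\Xiop(a\phi_0+b\phi_1)$ and then uses $(Th,h)=\|I_1^\alpha h\|_2^2$; both arguments are valid.
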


\begin{proof}
Let
\[
        N:=\{f\in L^2(0,1):A_f=0,\ B_f=0\}.
\]
The trace formulas give
\[
        \Dom(\Lmin)=\{Tf:f\in N\},
        \qquad \Lmin(Tf)=f.
\]
If $q\perp\Dom(\Lmin)$, then $(Tf,q)=0$ for all $f\in N$.  Since $T=T^*$ and the inner product is linear in the first argument, $(f,Tq)=0$ for all $f\in N$, and hence
$Tq\in N^\perp=\operatorname{span}\{\phi_0,\phi_1\}$.  Write
$Tq=a\phi_0+b\phi_1$.  Since
\[
 \Xiop(Tq)=(0,B_q,A_q,0)^T,
 \qquad
 \Xiop(a\phi_0+b\phi_1)=(a,a+b,b,b)^T,
\]
one obtains $a=b=0$, and hence $Tq=0$.  The injectivity used here follows explicitly from
\[
        (Tq,q)=\|I_1^\alpha q\|_2^2.
\]
Thus $Tq=0$ implies $I_1^\alpha q=0$, and then
$q=D_1^\alpha I_1^\alpha q=0$.  Therefore $\Dom(\Lmin)$ is dense.

The Green formula gives $\Lmax\subset\Lmin^*$.  Conversely, if
$w\in\Dom(\Lmin^*)$, there is $h\in L^2(0,1)$ such that
\[
        (f,w)=(Tf,h),\qquad f\in N.
\]
Since $T=T^*$ and the inner product is linear in the first argument, $(Tf,h)=(f,Th)$.  Thus $(f,w-Th)=0$ for all $f\in N$, so
$w-Th\in N^\perp=\operatorname{span}\{\phi_0,\phi_1\}$.  Hence
$w\in\Dmax$ and $\Lmax w=h$.
\end{proof}

Lemma \ref{minimal-adjoint-lemma} justifies the notation $\Lmax$: it is the
maximal operator in the extension-theoretic sense associated with the
boundary-minimal pair $\Lmin\subset\Lmax$.

For later use set
\begin{equation}\label{Gamma-fractional-definition}
        \Gzero u=(\xi_1^-(u),\xi_2^-(u))^T,
        \qquad
        \Gone u=(\xi_1^+(u),-\xi_2^+(u))^T.
\end{equation}
Then \eqref{eq01-corrected} takes the standard boundary-triplet form
\begin{equation}\label{fractional-boundary-triplet-form}
(\Lmax u,v)-(u,\Lmax v)
=(\Gone u,\Gzero v)_{\C^2}-(\Gzero u,\Gone v)_{\C^2}.
\end{equation}

\begin{prop}\label{power-boundary-triplet}
The triple $(\C^2,\Gzero,\Gone)$ is an ordinary boundary triplet for $\Lmin^*=\Lmax$ in the standard sense of \cite{GG91,BHdS20}.
In particular, the deficiency indices of $\Lmin$ are $(2,2)$.
\end{prop}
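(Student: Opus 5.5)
To prove Proposition \ref{power-boundary-triplet} I will check the three axioms of an ordinary boundary triplet for $\Lmin^*=\Lmax$ (Lemma \ref{minimal-adjoint-lemma}) in the form of \cite{GG91,BHdS20}: the abstract Green identity, surjectivity of the combined trace $(\Gzero,\Gone):\Dmax\to\C^2\times\C^2$, and, in the convention where it is required, that $\Lmin$ is the restriction of $\Lmax$ to $\ker\Gzero\cap\ker\Gone$.

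\emph{Green identity.} The identity \eqref{fractional-boundary-triplet-form} is a rearrangement of Lemma \ref{green-formula}. Expanding $(\Gone u,\Gzero v)_{\C^2}-(\Gzero u,\Gone v)_{\C^2}$ with \eqref{Gamma-fractional-definition} gives
\[
\xi_1^+(u)\overline{\xi_1^-(v)}-\xi_2^+(u)\overline{\xi_2^-(v)}
-\xi_1^-(u)\overline{\xi_1^+(v)}+\xi_2^-(u)\overline{\xi_2^+(v)}.
\]
The minus sign in the second component of $\Gone$ produces the term $+\xi_2^-(u)\overline{\xi_2^+(v)}$, so this expression agrees term by term with \eqref{eq01-corrected}. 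The only care needed is the sign bookkeeping.

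\emph{Surjectivity.} The map $(\Gzero u,\Gone u)$ is obtained from $\Xiop u$ by the invertible linear change of coordinates $(a,b,c,d)\mapsto\bigl((a,b),(c,-d)\bigr)$. Hence its surjectivity onto $\C^4$ is equivalent to Lemma \ref{trace-surjectivity}.

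\emph{Kernel condition.} By definition \eqref{minimal-operator}, $\ker\Gzero\cap\ker\Gone=\ker\Xiop=\Dom(\Lmin)$. Closedness of $\Lmin$ and density of its domain are already recorded, so all axioms hold.

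\emph{Deficiency indices.} For an ordinary boundary triplet with boundary space $\mathcal H$, the standard theory gives $n_\pm(\Lmin)=\dim\mathcal H$, which here is $2$. As a direct check independent of the abstract theory, I would note the following. By \eqref{power-graph-norm-equivalence} and Lemma \ref{abstract-graph-lemma}, $\Dmax/\Dom(\Lmin)\cong\C^4$ via $\Xiop$, using surjectivity. von Neumann's formula, $\Dom(\Lmin^*)=\Dom(\Lmin)\dotplus\ker(\Lmax-i)\dotplus\ker(\Lmax+i)$, then gives $n_++n_-=4$. Equality $n_+=n_-$ holds because, for example, the self-adjoint extension $\ker\Gzero$ exists; this relies on the triplet property, which makes $\Gzero$-kernel restrictions self-adjoint.

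The main obstacle is the sign convention in the Green identity, which must match the inner product convention (linear in the first argument) used in \cite{BHdS20}. Everything else is inherited from the earlier lemmas.
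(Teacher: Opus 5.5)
Your proposal is correct and follows the paper's proof. The paper invokes the same three ingredients: the Green identity \eqref{fractional-boundary-triplet-form}, whose sign bookkeeping you verify explicitly; surjectivity inherited from Lemma~\ref{trace-surjectivity} through the invertible coordinate change; and $\Lmin^*=\Lmax$ from Lemma~\ref{minimal-adjoint-lemma}. Your kernel check and the von Neumann count $n_++n_-=4$ are harmless additions to the standard consequence $n_\pm=\dim\C^2$, which the paper leaves implicit.
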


\begin{proof}
The Green identity is \eqref{fractional-boundary-triplet-form}.  By Lemma
\ref{trace-surjectivity}, the map
$u\mapsto(\Gzero u,\Gone u)$ from $\Dmax$ onto $\C^2\oplus\C^2$ is surjective.
Together with Lemma \ref{minimal-adjoint-lemma}, these are precisely the defining properties of an ordinary boundary triplet; see \cite{GG91,BHdS20}.
\end{proof}

\subsection{Self-adjoint boundary conditions}\label{sec:power-selfadjoint}

The Green form is represented by
\begin{equation}\label{J-def}
J=
\begin{pmatrix}
0&0&-1&0\\
0&0&0&1\\
1&0&0&0\\
0&-1&0&0
\end{pmatrix},
\qquad J^*=-J,
\qquad J^2=-I_4,
\end{equation}
so that
\begin{equation}\label{green-J-form}
        (\Lmax u,v)-(u,\Lmax v)=(\Xiop u,J\Xiop v)_{\C^4}.
\end{equation}

\begin{defi}\label{admissible-boundary-matrix}
A matrix $\omega\in\C^{2\times4}$ is called admissible if
\begin{equation}\label{admissible-conditions}
        \rank\omega=2,
        \qquad
        \omega J\omega^*=0.
\end{equation}
Two admissible matrices that differ by left multiplication with an element of
$\mathrm{GL}(2,\C)$ define the same boundary subspace.
\end{defi}

For an admissible $\omega$, define
\begin{equation}\label{Lomega-domain}
\Dom(\mathcal L_\omega)=\{u\in\Dmax:\omega\Xiop u=0\},
        \qquad
        \mathcal L_\omega u=\Lmax u.
\end{equation}

\begin{thm}\label{power-all-selfadjoint-extensions}
For every admissible boundary matrix $\omega$, the operator $\mathcal L_\omega$ is a self-adjoint extension of $\Lmin$.  Conversely, every self-adjoint extension of $\Lmin$ is obtained in this way.
\end{thm}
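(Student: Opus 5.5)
The plan is to reduce Theorem \ref{power-all-selfadjoint-extensions} to finite-dimensional linear algebra on $\C^4$ with the indefinite form $\langle x,y\rangle_J:=(x,Jy)_{\C^4}$. By Proposition \ref{power-boundary-triplet} and Lemma \ref{minimal-adjoint-lemma}, every extension $\Lmin\subset\widetilde L\subset\Lmax$ is determined by $\Theta:=\Xiop(\Dom\widetilde L)\subset\C^4$. Indeed, $\Dom\widetilde L=\Xiop^{-1}(\Theta)$, because $\ker\Xiop=\Dom(\Lmin)\subset\Dom\widetilde L$, and $\Xiop$ is onto by Lemma \ref{trace-surjectivity}. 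Any self-adjoint extension $\widetilde L$ of $\Lmin$ satisfies $\widetilde L\subset\Lmin^*=\Lmax$, so it is of this form. I will therefore prove that $\widetilde L$ is self-adjoint if and only if $\Theta=\Theta^{[\perp]}$, where
\[
 \Theta^{[\perp]}:=\{y\in\C^4:(x,Jy)_{\C^4}=0 \text{ for all } x\in\Theta\},
\]
and then identify the Lagrangian subspaces $\Theta=\Theta^{[\perp]}$ with the planes $\ker\omega$ for admissible $\omega$.

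\emph{Step 1 (adjoint of a restriction).} Since $\widetilde L\supset\Lmin$, we have $\widetilde L^*\subset\Lmin^*=\Lmax$. For $v\in\Dmax$, Green's formula \eqref{green-J-form} gives
\[
 (\widetilde L u,v)-(u,\Lmax v)=(\Xiop u,J\Xiop v)_{\C^4}.
\]
Hence $v\in\Dom\widetilde L^*$ exactly when $(x,J\Xiop v)=0$ for all $x\in\Theta$, by surjectivity of $\Xiop$. So $\Dom\widetilde L^*=\Xiop^{-1}(\Theta^{[\perp]})$, and self-adjointness is equivalent to $\Theta=\Theta^{[\perp]}$. Since $J$ is invertible, $\dim\Theta^{[\perp]}=4-\dim\Theta$, so a Lagrangian subspace necessarily has dimension $2$.

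\emph{Step 2 (matrix description).} Given an admissible $\omega$, set $\Theta=\ker\omega$. Then $\dim\Theta=2$ because $\rank\omega=2$. Next, $\Theta^{[\perp]}=\{y: Jy\perp\Theta\}=\{y:Jy\in\operatorname{ran}\omega^*\}=J^{-1}\operatorname{ran}\omega^*$, and I claim this equals $\ker\omega$. The condition $\omega J\omega^*=0$ says that $J\operatorname{ran}\omega^*\subset\ker\omega$, and both spaces are two-dimensional, so $J\operatorname{ran}\omega^*=\ker\omega$. Applying $J^{-1}=-J$ gives $J^{-1}\operatorname{ran}\omega^*=\ker\omega$ as well, since $J\operatorname{ran}\omega^*$ and $J^{-1}\operatorname{ran}\omega^*$ coincide as subspaces. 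Hence $\Theta=\Theta^{[\perp]}$, and $\mathcal L_\omega$ is self-adjoint by Step 1.

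For the converse, take a self-adjoint extension with Lagrangian $\Theta$ of dimension $2$. Choose $\omega\in\C^{2\times4}$ whose rows span $\Theta^\perp$ (the Euclidean orthogonal complement); then $\rank\omega=2$ and $\ker\omega=\Theta$. From $\Theta=\Theta^{[\perp]}$ we get $J\Theta=\Theta^\perp=\operatorname{ran}\omega^*$, hence $\operatorname{ran}\omega^*=J^{-1}(\Theta^\perp)\cdot$ up to the same sign bookkeeping, that is, $\operatorname{ran}\omega^*\subset J\ker\omega$. Therefore $J\operatorname{ran}\omega^*\subset J^2\ker\omega=\ker\omega$, which gives $\omega J\omega^*=0$. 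So $\omega$ is admissible and $\widetilde L=\mathcal L_\omega$.

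The only delicate step is the sign and conjugation bookkeeping with $J$ in the inner product convention (linear in the first argument), specifically checking that $\Theta^{[\perp]}$ corresponds exactly to the condition $\omega J\omega^*=0$ and not to $\omega J^*\omega^*=0$. This is harmless because $J^*=-J$. Alternatively, one can cite the standard result \cite{GG91,BHdS20} that, for an ordinary boundary triplet, self-adjoint extensions correspond to self-adjoint relations $\Theta$ in $\C^2$, and translate it through $\Gzero,\Gone$.
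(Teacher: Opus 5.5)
Your proposal is correct and follows the paper's proof. The linear algebra is the same: $\omega J\omega^*=0$ gives $J\operatorname{ran}\omega^*\subset\ker\omega$, with equality by a dimension count, and the converse runs through $J\Theta=\Theta^\perp=\operatorname{ran}\omega^*$. The one difference is that your Step~1 derives the correspondence between self-adjoint extensions and Lagrangian subspaces directly from Green's formula, surjectivity of $\Xiop$ and $\Lmin^*=\Lmax$, whereas the paper simply cites the standard boundary-triplet correspondence.

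Two cosmetic slips should be fixed. To get $\ker\omega=\Theta$ you need $\operatorname{ran}\omega^*$ (spanned by the conjugated rows of $\omega$) to equal $\Theta^\perp$, not the span of the rows themselves. Also, the phrase ``$\operatorname{ran}\omega^*=J^{-1}(\Theta^\perp)$'' should read $\operatorname{ran}\omega^*=\Theta^\perp=J\Theta$.
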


\begin{proof}
By Proposition \ref{power-boundary-triplet} and the standard boundary-triplet correspondence \cite{BHdS20,GG91}, self-adjoint extensions of $\Lmin$ correspond bijectively to Lagrangian subspaces of the boundary space $\C^4$ equipped with the form $(z,Jw)_{\C^4}$.  The subspace
$\ker\omega$ has dimension two because $\rank\omega=2$.  With the convention
\[
\mathcal M^{\perp_J}:=\{z\in\C^4:(w,Jz)_{\C^4}=0
        \text{ for all }w\in\mathcal M\},
\]
the identity $J^2=-I$ gives
$(\ker\omega)^{\perp_J}=-J\operatorname{ran}\omega^*$.
Equivalently this is the same subspace as $J\operatorname{ran}\omega^*$, since multiplication by $-1$ does not change a linear subspace.  The identity $\omega J\omega^*=0$ says that
$J\operatorname{ran}\omega^*\subset\ker\omega$; both spaces have dimension two, so they are equal.  Thus $\ker\omega$ equals its boundary-form orthogonal complement and is Lagrangian.  Conversely, if $\mathcal M$ is a Lagrangian plane and $\omega$ is any rank-two matrix with $\ker\omega=\mathcal M$, then
$J\operatorname{ran}\omega^*=\mathcal M$, which implies
$\omega J\omega^*=0$.  Surjectivity of $\Xiop$ transfers this finite-dimensional parametrization to the operator domains.
\end{proof}

\begin{rem}
Equivalently, every self-adjoint extension can be written in boundary-triplet coordinates as
\[
        A\Gzero u+B\Gone u=0,
        \qquad
        \rank(A\;B)=2,
        \qquad
        AB^*=BA^*.
\]
This form is often convenient when comparing the present construction with standard boundary-triplet and extension theory \cite{BHdS20,GG91}.
\end{rem}

\begin{rem}[Finite-rank resolvent differences]\label{finite-rank-resolvent-differences}
The ordinary boundary-triplet correspondence also gives the usual Krein-type finite-dimensional resolvent correction \cite{BHdS20,GG91}.  If $\mathcal L_{\omega_1}$ and $\mathcal L_{\omega_2}$ are two self-adjoint extensions and
\[
        z\in\rho(\mathcal L_{\omega_1})\cap
        \rho(\mathcal L_{\omega_2}),
\]
then
\[
        \rank\left((\mathcal L_{\omega_1}-z)^{-1}
        -(\mathcal L_{\omega_2}-z)^{-1}\right)\le2.
\]
Formula \eqref{inverse-general-formula} below is the explicit $z=0$ version of this finite-dimensional correction.
\end{rem}

\subsection{Zero modes, inverse formulas and compactness}\label{sec:power-inverse}

The trace vectors of the homogeneous modes and of the particular solution are
\begin{equation}\label{homogeneous-traces}
\Xiop\phi_0=(1,1,0,0)^T,
        \qquad
\Xiop\phi_1=(0,1,1,1)^T,
\end{equation}
and
\begin{equation}\label{Xi-Tf}
        \Xiop(Tf)=(0,B_f,A_f,0)^T.
\end{equation}
For $\omega=(\omega_{ij})\in\C^{2\times4}$ define
\begin{equation}\label{Momega-def}
M_\omega:=
\begin{pmatrix}
\omega_{11}+\omega_{12} & \omega_{12}+\omega_{13}+\omega_{14}\\
\omega_{21}+\omega_{22} & \omega_{22}+\omega_{23}+\omega_{24}
\end{pmatrix}
\end{equation}
and
\begin{equation}\label{Womega-def}
W_\omega:=
\begin{pmatrix}
\omega_{12}&\omega_{13}\\
\omega_{22}&\omega_{23}
\end{pmatrix}.
\end{equation}

\begin{rem}[Invariance under a change of boundary matrix]\label{boundary-matrix-invariance}
If $\widetilde\omega=G\omega$ with $G\in\mathrm{GL}(2,\C)$, then
$\ker\widetilde\omega=\ker\omega$ and
\[
        M_{\widetilde\omega}=GM_\omega,
        \qquad
        W_{\widetilde\omega}=GW_\omega.
\]
Consequently,
\[
 \det M_{\widetilde\omega}\ne0
 \quad\Longleftrightarrow\quad
 \det M_\omega\ne0,
 \qquad
 \rank W_{\widetilde\omega}=\rank W_\omega.
\]
Whenever $M_\omega$ is invertible,
\[
        M_{\widetilde\omega}^{-1}\widetilde\omega
        =(GM_\omega)^{-1}G\omega
        =M_\omega^{-1}\omega.
\]
Thus the zero criterion, inverse formula and exact correction rank below depend only on the boundary subspace, not on its chosen matrix representative.
\end{rem}

\begin{prop}[Zero-eigenvalue criterion]\label{kernel-criterion}
For an admissible $\omega$,
\begin{equation}\label{kernel-formula}
\ker\mathcal L_\omega
=\{C_0\phi_0+C_1\phi_1:M_\omega(C_0,C_1)^T=0\}.
\end{equation}
In particular,
\[
        0\in\rho(\mathcal L_\omega)
        \quad\Longleftrightarrow\quad
        \det M_\omega\ne0.
\]
\end{prop}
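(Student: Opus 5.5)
The kernel formula follows from the Volterra representation in Theorem \ref{power-natural-domain-characterization}, and the resolvent statement from compactness of $T$. First, by \eqref{power-natural-action}, an element $u=Tf+C_0\phi_0+C_1\phi_1\in\Dmax$ satisfies $\Lmax u=0$ if and only if $f=0$; this uses the uniqueness of the representation. Hence $\ker\Lmax=\operatorname{span}\{\phi_0,\phi_1\}$. Such a $u$ lies in $\Dom(\mathcal L_\omega)$ exactly when $\omega\Xiop(C_0\phi_0+C_1\phi_1)=0$. By \eqref{homogeneous-traces},
\[
\Xiop(C_0\phi_0+C_1\phi_1)=(C_0,\ C_0+C_1,\ C_1,\ C_1)^T .
\]
Multiplying by $\omega$ and collecting coefficients of $C_0$ and $C_1$, row by row, gives exactly $M_\omega(C_0,C_1)^T$ as defined in \eqref{Momega-def}. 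This proves \eqref{kernel-formula}. Since $\phi_0,\phi_1$ are linearly independent, $\ker\mathcal L_\omega=\{0\}$ if and only if $\det M_\omega\ne0$.

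For the equivalence with $0\in\rho(\mathcal L_\omega)$, one direction is immediate: if $0\in\rho(\mathcal L_\omega)$, the kernel is trivial, so $\det M_\omega\ne0$. For the converse, assume $\det M_\omega\ne0$ and solve $\mathcal L_\omega u=h$ explicitly for each $h\in L^2(0,1)$. Take the ansatz $u=Th+C_0\phi_0+C_1\phi_1$. By \eqref{Xi-Tf} the boundary condition becomes
\[
M_\omega(C_0,C_1)^T=-\omega(0,B_h,A_h,0)^T .
\]
This system is uniquely solvable, and $(C_0,C_1)$ depends boundedly on $h$ because $A_h,B_h$ are continuous functionals. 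Together with injectivity, this shows that $\mathcal L_\omega$ is a bijection from its domain onto $L^2(0,1)$. Its inverse $h\mapsto Th+C_0(h)\phi_0+C_1(h)\phi_1$ is bounded, since $T$ is bounded and the correction has finite rank with bounded coefficients. Alternatively, the closed graph theorem applies, since $\mathcal L_\omega$ is closed as a self-adjoint operator by Theorem \ref{power-all-selfadjoint-extensions}. Hence $0\in\rho(\mathcal L_\omega)$.

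The expected main obstacle is bookkeeping rather than analysis: matching the index conventions of $\omega_{ij}$ with the trace ordering $(\xi_1^-,\xi_2^-,\xi_1^+,\xi_2^+)$. Compactness or Fredholm arguments are not needed, because surjectivity comes from the explicit construction. One could also argue that, for a self-adjoint operator, $0$ not being an eigenvalue does not by itself give $0\in\rho$. However, $T$ is compact ($I_1^\alpha$ is Hilbert--Schmidt for $\alpha>1/2$), so any resolvent is compact and the spectrum is discrete. The explicit inverse therefore settles the claim either way.
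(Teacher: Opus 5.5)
Your proposal is correct and follows the paper's proof essentially step for step: you read off the kernel from the unique representation with $f=0$ and the traces \eqref{homogeneous-traces}. For $\det M_\omega\ne0$ you solve the boundary system for $(C_0,C_1)$ uniquely, with coefficients depending boundedly on $A_h,B_h$, which gives a bounded everywhere-defined inverse. The appeals to compactness of $T$ in your opening sentence and closing paragraph are unnecessary, as you note yourself, because the explicit construction already gives $0\in\rho(\mathcal L_\omega)$.
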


\begin{proof}
If $u\in\ker\mathcal L_\omega$, then the canonical representation of $u$
has $f=0$, so $u=C_0\phi_0+C_1\phi_1$.  Substitution of
\eqref{homogeneous-traces} into $\omega\Xiop u=0$ gives
\eqref{kernel-formula}.  If $\det M_\omega=0$, a nonzero vector in
$\ker M_\omega$ produces a nonzero element of $\ker\mathcal L_\omega$, so
$0\notin\rho(\mathcal L_\omega)$.  If $\det M_\omega\ne0$, then for every
$f\in L^2(0,1)$ the boundary system for $(C_0,C_1)$ has a unique solution,
and the resulting solution operator is bounded because $T,A_f,B_f$ are
bounded and $\phi_0,\phi_1\in L^2(0,1)$.  Hence
$0\in\rho(\mathcal L_\omega)$.  The resulting inverse is written explicitly
in Theorem~\ref{inverse-general}.
\end{proof}

\begin{thm}[Explicit inverse]\label{inverse-general}
Let $\omega$ be admissible and assume $\det M_\omega\ne0$.  Then
\begin{equation}\label{inverse-general-formula}
\mathcal L_\omega^{-1}f
=Tf-
\begin{pmatrix}\phi_0&\phi_1\end{pmatrix}
M_\omega^{-1}\omega
\begin{pmatrix}
0\\ B_f\\ A_f\\ 0
\end{pmatrix},
        \qquad f\in L^2(0,1).
\end{equation}
Thus $\mathcal L_\omega^{-1}$ is $I_0^\alpha I_1^\alpha$ plus an operator of rank at most two.
\end{thm}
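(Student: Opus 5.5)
The plan is to solve $\mathcal L_\omega u=f$ directly in the canonical representation of Theorem~\ref{power-natural-domain-characterization}, and then read off the inverse. Fix $f\in L^2(0,1)$. Every $u\in\Dmax$ has a unique representation $u=Tg+C_0\phi_0+C_1\phi_1$ with $\Lmax u=g$. So the equation $\mathcal L_\omega u=f$ forces $g=f$, and the only remaining unknowns are $(C_0,C_1)\in\C^2$. These are determined by the boundary condition $\omega\Xiop u=0$.

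Next we compute the trace vector. The map $\Xiop$ is linear, so by \eqref{homogeneous-traces} and \eqref{Xi-Tf}
\[
\Xiop u=(0,B_f,A_f,0)^T+C_0(1,1,0,0)^T+C_1(0,1,1,1)^T .
\]
Applying $\omega$ to the two homogeneous columns gives exactly $M_\omega(C_0,C_1)^T$. Indeed, the first column of $M_\omega$ is $\omega_{i1}+\omega_{i2}$, and the second is $\omega_{i2}+\omega_{i3}+\omega_{i4}$, which is the definition \eqref{Momega-def}. The boundary condition therefore reads
\[
M_\omega\begin{pmatrix}C_0\\ C_1\end{pmatrix}=-\,\omega\,(0,B_f,A_f,0)^T .
\]
Since $\det M_\omega\ne0$, this system has the unique solution $(C_0,C_1)^T=-M_\omega^{-1}\omega(0,B_f,A_f,0)^T$. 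Substituting it into $u=Tf+(\phi_0\ \phi_1)(C_0,C_1)^T$ gives \eqref{inverse-general-formula}.

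Uniqueness of this $u$ follows from Proposition~\ref{kernel-criterion}, or directly from the uniqueness of the representation combined with uniqueness of the solution of the $2\times2$ system. Hence the displayed operator is a genuine two-sided inverse of $\mathcal L_\omega$ on $L^2(0,1)$. It is bounded because $T$, $A_f$ and $B_f$ are bounded (the latter via $\alpha>1/2$) and $\phi_0,\phi_1\in L^2(0,1)$.

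Finally, the rank statement. The correction term takes values in $\operatorname{span}\{\phi_0,\phi_1\}$, which is two-dimensional, so its rank is at most two. Equivalently, the correction factors through the map $f\mapsto(A_f,B_f)\in\C^2$.

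There is no serious obstacle here. The only point needing care is the bookkeeping of the columns of $\omega$ against the trace vectors, so that $M_\omega$ is reproduced exactly with the correct index pattern and sign in \eqref{inverse-general-formula}. By Remark~\ref{boundary-matrix-invariance}, the formula is also independent of the chosen representative $\omega$.
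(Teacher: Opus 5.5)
Your proposal is correct and matches the paper's proof: every solution of $\Lmax u=f$ is written as $Tf+C_0\phi_0+C_1\phi_1$, and \eqref{homogeneous-traces} and \eqref{Xi-Tf} turn $\omega\Xiop u=0$ into $M_\omega(C_0,C_1)^T=-\omega(0,B_f,A_f,0)^T$. One then inverts $M_\omega$ and gets boundedness from $T$, $A_f$, $B_f$ and $\phi_0,\phi_1\in L^2(0,1)$; your column check of $M_\omega$ and the observation that the correction has range in $\operatorname{span}\{\phi_0,\phi_1\}$ just spell out steps the paper leaves implicit.
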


\begin{proof}
Every solution of $\Lmax u=f$ has the form
$u=Tf+C_0\phi_0+C_1\phi_1$.  By \eqref{homogeneous-traces} and
\eqref{Xi-Tf}, the boundary condition is equivalent to
\[
M_\omega\binom{C_0}{C_1}
=-\omega\begin{pmatrix}0\\B_f\\A_f\\0\end{pmatrix}.
\]
The assumed invertibility of $M_\omega$ gives the unique solution
\eqref{inverse-general-formula}.  Boundedness follows from the boundedness of
$T,A_f,B_f$ and the inclusion $\phi_0,\phi_1\in L^2(0,1)$.
\end{proof}

\begin{prop}[Exact rank of the inverse correction]\label{exact-rank-correction}
Let $\omega$ be admissible and assume $\det M_\omega\ne0$.  Then
\[
        \rank(\mathcal L_\omega^{-1}-T)=\rank W_\omega .
\]
In particular, the rank-two bound in Theorem \ref{inverse-general} is sharp.
\end{prop}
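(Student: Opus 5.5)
By \eqref{inverse-general-formula}, the correction is a composition of three maps:
\[
\mathcal L_\omega^{-1}-T=-\Phi\,M_\omega^{-1}\,\omega\,E ,
\]
where $E:L^2(0,1)\to\C^4$ is $Ef=(0,B_f,A_f,0)^T$ and $\Phi:\C^2\to L^2(0,1)$ is $\Phi(a,b)^T=a\phi_0+b\phi_1$. The plan is to compute the rank of this product factor by factor.

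First, $\omega E f$ only sees the second and third columns of $\omega$. Concretely,
\[
\omega E f=W_\omega\binom{B_f}{A_f}.
\]
So the correction equals $-\Phi M_\omega^{-1}W_\omega R$, with $Rf=(B_f,A_f)^T$. Lemma \ref{trace-surjectivity} (its proof) shows that $f\mapsto(A_f,B_f)$ is onto $\C^2$, because the Riesz representers $s^{\alpha-1}$ and $s^\alpha$ are linearly independent. Hence $R$ is surjective.

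Next, $\Phi$ is injective because $\phi_0$ and $\phi_1$ are linearly independent in $L^2(0,1)$. The matrix $M_\omega^{-1}$ is invertible by assumption. Composing with an injective map on the left and a surjective map on the right does not change rank, so
\[
\rank(\mathcal L_\omega^{-1}-T)=\rank\bigl(M_\omega^{-1}W_\omega\bigr)=\rank W_\omega .
\]
By Remark \ref{boundary-matrix-invariance} this number does not depend on the chosen representative $\omega$.

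For sharpness, it remains to exhibit an admissible $\omega$ with $\det M_\omega\ne0$ and $\rank W_\omega=2$. This is the only step that needs real work, because the Lagrangian condition $\omega J\omega^*=0$ restricts the choices. I would first try separated conditions $\xi_2^-=0$ and $\xi_1^+=0$, i.e. $\omega=(e_2^T;e_3^T)$, so that $W_\omega=I_2$. To check admissibility, compute $Je_3=-e_1$, so $\omega J\omega^*$ has entries $(e_i,Je_j)$ for $i,j\in\{2,3\}$. The relevant ones are $(e_2,Je_3)=(e_2,-e_1)=0$ and $(e_2,Je_2)=(e_2,-e_4)=0$, and likewise the rest vanish, so $\omega$ is admissible. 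For this choice $M_\omega=\begin{pmatrix}1&1\\0&1\end{pmatrix}$ is invertible. The rank-two bound is therefore attained. As a consistency check, the reference conditions $\xi_1^-=\xi_2^+=0$ give $W_\omega=0$, recovering $\mathcal L_\omega^{-1}=T$.
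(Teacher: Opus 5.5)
Your proof is correct and is essentially the paper's argument: the same factorization $-\Phi M_\omega^{-1}W_\omega R$, with $R f=(B_f,A_f)^T$ surjective, $M_\omega^{-1}$ invertible, and $(c_0,c_1)\mapsto c_0\phi_0+c_1\phi_1$ injective. For sharpness you use the same example $\omega_0$ that the paper points to after the proposition, and your check of admissibility, $W_{\omega_0}=I_2$ and $\det M_{\omega_0}=1$ is accurate.
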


\begin{proof}
From \eqref{inverse-general-formula},
\[
(\mathcal L_\omega^{-1}-T)f
=
-\begin{pmatrix}\phi_0&\phi_1\end{pmatrix}
M_\omega^{-1}W_\omega
\binom{B_f}{A_f}.
\]
The map $f\mapsto(B_f,A_f)$ is onto $\C^2$, because its Riesz representers are constant multiples of $s^\alpha$ and $s^{\alpha-1}$, which are linearly independent in $L^2(0,1)$.  The matrix $M_\omega^{-1}$ is invertible and the map $(c_0,c_1)\mapsto c_0\phi_0+c_1\phi_1$ is injective.  Therefore the rank is exactly $\rank W_\omega$.
\end{proof}

For the reference extension $\omega_+$ below the correction has rank zero.  For
\[
        \omega_D=
        \begin{pmatrix}1&0&0&0\\0&1&0&0\end{pmatrix}
\]
it has rank one, and for $\omega_0$ in Example \ref{reference-extension-example} it has rank two.  Thus all possible ranks occur among invertible self-adjoint realizations.

\begin{ex}[A positive reference extension]\label{positive-reference-power}
Let
\begin{equation}\label{omega-positive-power}
        \omega_+=
        \begin{pmatrix}1&0&0&0\\0&0&0&1\end{pmatrix}.
\end{equation}
The boundary conditions are
\[
        \xi_1^-(u)=0,
        \qquad
        \xi_2^+(u)=0.
\]
They force $C_0=C_1=0$, so
\begin{equation}\label{positive-reference-inverse-power}
        \mathcal L_{\omega_+}^{-1}=T=(I_1^\alpha)^*I_1^\alpha.
\end{equation}
Indeed, every $u\in\Dom(\mathcal L_{\omega_+})$ has the form $u=Tf$, and
\[
 (\mathcal L_{\omega_+}u,u)=(f,Tf)=\|I_1^\alpha f\|_2^2.
\]
Since $T$ is positive and $0\le T^2\le\|T\|T$,
\[
 \|u\|_2^2=\|Tf\|_2^2=(f,T^2f)
 \le \|T\|(f,Tf).
\]
Therefore
\begin{equation}\label{positive-reference-lower-bound}
        \mathcal L_{\omega_+}\ge \|T\|^{-1}I,
\end{equation}
so the reference realization is strictly positive.  Moreover,
$I_1^\alpha$ is Hilbert--Schmidt for $\alpha>1/2$, and hence
$T=(I_1^\alpha)^*I_1^\alpha$ is compact; thus the inverse is compact.
\end{ex}

\begin{ex}[Homogeneous corrections are generally necessary]\label{reference-extension-example}
Consider
\begin{equation}\label{BC-0-corrected}
        \xi_2^-(u)=0,
        \qquad
        \xi_1^+(u)=0,
\end{equation}
for which
\[
        \omega_0=
        \begin{pmatrix}0&1&0&0\\0&0&1&0\end{pmatrix},
        \qquad
        M_{\omega_0}=\begin{pmatrix}1&1\\0&1\end{pmatrix}.
\]
Formula \eqref{inverse-general-formula} becomes
\begin{equation}\label{L0-inverse-corrected}
\mathcal L_{\omega_0}^{-1}f
=I_0^\alpha I_1^\alpha f
+(A_f-B_f)\frac{x^{\alpha-1}}{\Gamma(\alpha)}
-A_f\frac{x^\alpha}{\Gamma(\alpha+1)}.
\end{equation}
No moment restriction is imposed on $f$.
\end{ex}

\begin{cor}[Compact resolvent for every self-adjoint extension]\label{compact-resolvent-corollary}
Every self-adjoint extension $\mathcal L_\omega$ has compact resolvent.  Hence its spectrum is real and purely discrete, with finite-multiplicity eigenvalues having no finite accumulation point, and its eigenfunctions form an orthonormal basis of $L^2(0,1)$.  If $0\in\rho(\mathcal L_\omega)$, then $\mathcal L_\omega^{-1}$ is compact.
\end{cor}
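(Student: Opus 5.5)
The plan is to reduce compactness of the resolvent for an arbitrary self-adjoint extension $\mathcal L_\omega$ to compactness of the reference inverse $T$. The reduction uses the finite-rank resolvent differences in Remark~\ref{finite-rank-resolvent-differences}.

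First, I record that $T=(I_1^\alpha)^*I_1^\alpha$ is compact. The kernel of $I_1^\alpha$ is $\Gamma(\alpha)^{-1}(s-t)_+^{\alpha-1}$, which lies in $L^2((0,1)^2)$ precisely because $2\alpha-2>-1$. So $I_1^\alpha$ is Hilbert--Schmidt, and $T$ is compact, as already noted in Example~\ref{positive-reference-power}. By that example, $0\in\rho(\mathcal L_{\omega_+})$ and $\mathcal L_{\omega_+}^{-1}=T$. The resolvent identity then gives
\[
(\mathcal L_{\omega_+}-z)^{-1}=T\bigl(I-zT\bigr)^{-1}
\]
for every $z\in\rho(\mathcal L_{\omega_+})$. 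Hence every resolvent of the reference realization is compact.

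Next, let $\omega$ be any admissible matrix. Since $\mathcal L_\omega$ and $\mathcal L_{\omega_+}$ are self-adjoint by Theorem~\ref{power-all-selfadjoint-extensions}, any nonreal $z$ lies in both resolvent sets. Remark~\ref{finite-rank-resolvent-differences} then says the difference of the two resolvents at $z$ has rank at most two. A compact operator plus a finite-rank operator is compact, so $(\mathcal L_\omega-z)^{-1}$ is compact. The resolvent identity transfers compactness to every point of $\rho(\mathcal L_\omega)$.

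The Krein-type remark is cited rather than derived, so I will also sketch a direct argument avoiding it. Fix nonreal $z$ and $f\in L^2$, and let $u=(\mathcal L_\omega-z)^{-1}f$. Then $\Lmax u=f+zu$, and by Theorem~\ref{power-natural-domain-characterization}
\[
u=T(f+zu)+C_0\phi_0+C_1\phi_1.
\]
The map $f\mapsto (C_0,C_1)$ is bounded because the graph norm controls the coefficients, by \eqref{power-graph-norm-equivalence}. Therefore $u=Tg+(\text{rank}\le 2)$ with $g=f+zu$ bounded in $f$, and compactness follows. I expect this step, the justification of finite rank, to be the only real obstacle. The direct formula handles it without appealing to the abstract theory.

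Finally, the spectral consequences follow from standard facts. A self-adjoint operator with compact resolvent has real, purely discrete spectrum consisting of finite-multiplicity eigenvalues with no finite accumulation point. Applying the spectral theorem for compact self-adjoint operators to $(\mathcal L_\omega-z)^{-1}$ with $z$ real in $\rho(\mathcal L_\omega)$ (which exists since the spectrum is discrete) yields an orthonormal basis of eigenfunctions, which are also eigenfunctions of $\mathcal L_\omega$. The resolvent is injective, so no kernel vectors need to be discarded. The last assertion of the corollary is the case $z=0$, for which Theorem~\ref{inverse-general} gives compactness directly as $T$ plus a rank-at-most-two operator.
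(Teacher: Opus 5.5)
Your proof is correct, and its main line is the paper's argument. Hilbert--Schmidt compactness of $I_1^\alpha$ gives compactness of $T=\mathcal L_{\omega_+}^{-1}$ and of the reference resolvent. The rank-two resolvent difference of Remark~\ref{finite-rank-resolvent-differences} at a nonreal point transfers compactness to every $\mathcal L_\omega$, and the compact-resolvent spectral theorem gives the rest.

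Your supplementary direct argument is also valid and is slightly more self-contained, since it needs neither the reference realization nor the cited Krein-type remark. There you write $(\mathcal L_\omega-z)^{-1}f=T(f+zu)+C_0\phi_0+C_1\phi_1$, with $(C_0,C_1)$ controlled by the graph-norm equivalence \eqref{power-graph-norm-equivalence}.
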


\begin{proof}
Since $\alpha>1/2$, the kernel
$(s-x)^{\alpha-1}\mathbf 1_{x<s}/\Gamma(\alpha)$ of $I_1^\alpha$
belongs to $L^2((0,1)^2)$.  Thus $I_1^\alpha$ is Hilbert--Schmidt and
$T=(I_1^\alpha)^*I_1^\alpha$ is compact.  For the reference extension,
$\mathcal L_{\omega_+}^{-1}=T$.  Since $T$ is self-adjoint,
$I-iT$ is boundedly invertible, and
\[
 (\mathcal L_{\omega_+}-i)^{-1}
 =(I-iT)^{-1}T=T(I-iT)^{-1}
\]
is compact.  The point $i$ belongs to the resolvent set of every self-adjoint extension.  By Remark \ref{finite-rank-resolvent-differences},
\[
 (\mathcal L_\omega-i)^{-1}
 -(\mathcal L_{\omega_+}-i)^{-1}
\]
has rank at most two.  Hence $(\mathcal L_\omega-i)^{-1}$ is compact for every admissible $\omega$.  The spectral conclusions follow from the compact-resolvent spectral theorem.  If $0\in\rho(\mathcal L_\omega)$, compactness of the inverse follows by taking the resolvent at zero.
\end{proof}

\subsection{Examples and the zero-eigenvalue obstruction}\label{sec:power-examples}

The following matrices are admissible; in the family $\omega_\rho$ one takes
$\rho\in\mathbb R$:
\begin{equation}\label{MatrixExamplesCorrected}
\omega_D=\begin{pmatrix}1&0&0&0\\0&1&0&0\end{pmatrix},
\qquad
\omega_+=\begin{pmatrix}1&0&0&0\\0&0&0&1\end{pmatrix},
\end{equation}
\begin{equation}\label{MatrixExamplesCorrected2}
\omega_\rho=\begin{pmatrix}\rho&1&0&0\\0&0&1&\rho\end{pmatrix},
\qquad
\omega_N=\begin{pmatrix}0&0&1&0\\0&0&0&1\end{pmatrix}.
\end{equation}
Admissibility does not imply invertibility at zero.  Direct calculation gives
\begin{equation}\label{det-rho}
        \det M_{\omega_\rho}=(\rho+1)^2.
\end{equation}
Thus $\mathcal L_{\omega_\rho}^{-1}$ exists for $\rho\ne-1$, whereas at
$\rho=-1$ one has
\[
        \ker\mathcal L_{\omega_{-1}}
        =\operatorname{span}\{\phi_0\}.
\]
Likewise,
\begin{equation}\label{neumann-kernel}
        \ker\mathcal L_{\omega_N}
        =\operatorname{span}\left\{
        \frac{x^{\alpha-1}}{\Gamma(\alpha)}\right\}.
\end{equation}
The operator $\mathcal L_{\omega_N}$ is self-adjoint but has no everywhere-defined inverse.  By Corollary \ref{compact-resolvent-corollary}, zero is an isolated eigenvalue of finite multiplicity, and the reduced inverse on
$(\ker\mathcal L_{\omega_N})^\perp$ is compact.

\section{Sharp singular-value asymptotics}\label{sec:power-schatten}

\begin{thm}[Sharp singular-value and Weyl asymptotics]\label{power-sharp-schatten}
Let $\omega$ be admissible and assume $0\in\rho(\mathcal L_\omega)$.  Then
\begin{equation}\label{sharp-leading-singular-values}
        s_n(\mathcal L_\omega^{-1})
        =(\pi n)^{-2\alpha}\bigl(1+O(n^{-1})\bigr),
        \qquad n\to\infty.
\end{equation}
Consequently,
\begin{equation}\label{Schatten-sharp}
        \mathcal L_\omega^{-1}\in S_p(L^2(0,1))
        \quad\Longleftrightarrow\quad
        p>\frac1{2\alpha}.
\end{equation}
At the critical exponent $p_0=1/(2\alpha)$ one has
\begin{equation}\label{weak-critical-schatten}
        \mathcal L_\omega^{-1}\in S_{p_0,\infty}
        \setminus S_{p_0},
        \qquad
        \lim_{n\to\infty} n^{2\alpha}s_n(\mathcal L_\omega^{-1})
        =\pi^{-2\alpha},
\end{equation}
and
\begin{equation}\label{critical-log-sum}
        \sum_{n\le N}s_n(\mathcal L_\omega^{-1})^{p_0}
        \sim \frac1\pi\log N.
\end{equation}
If $\nu_n$ denotes the eigenvalues of $|\mathcal L_\omega|$ arranged in nondecreasing order and counted with multiplicities, then
\begin{equation}\label{absolute-eigenvalue-asymptotics}
        \nu_n=(\pi n)^{2\alpha}\bigl(1+O(n^{-1})\bigr).
\end{equation}
Equivalently, for the counting function of the absolute spectrum,
\begin{equation}\label{weyl-counting-asymptotics}
        N_\omega(R):=\#\{n\in\mathbb N:\nu_n\le R\}
        =\frac1\pi R^{1/(2\alpha)}+O(1),
        \qquad R\to\infty.
\end{equation}
\end{thm}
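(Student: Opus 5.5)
We reduce everything to the reference operator $T=I_0^\alpha I_1^\alpha$ and then transfer the asymptotics through the finite-rank correction of Theorem~\ref{inverse-general}.

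\emph{Step 1: the reference operator.} Since $T=(I_1^\alpha)^*I_1^\alpha$, we have $s_n(T)=s_n(I_1^\alpha)^2$. The reflection $x\mapsto 1-x$ is unitary on $L^2(0,1)$ and conjugates $I_1^\alpha$ to $I_0^\alpha=K_\alpha$. Burman's estimate \eqref{intro-burman-asymptotics} therefore gives
\[
s_n(T)=(\pi n)^{-2\alpha}\bigl(1+O(n^{-1})\bigr).
\]

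\emph{Step 2: the rank-two perturbation.} By Theorem~\ref{inverse-general}, $\mathcal L_\omega^{-1}=T+F_\omega$ with $\rank F_\omega\le2$. The Ky Fan inequality $s_{n+m}(A+B)\le s_n(A)+s_{m+1}(B)$ from \cite{GK69,Sim05}, applied with $m=2$ in both directions, yields the interlacing
\[
s_{n+2}(T)\le s_n(\mathcal L_\omega^{-1})\le s_{n-2}(T)\qquad (n\ge3).
\]
Because $(\pi(n\pm2))^{-2\alpha}=(\pi n)^{-2\alpha}(1+O(n^{-1}))$, the index shift is absorbed into the error term, and \eqref{sharp-leading-singular-values} follows.

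\emph{Step 3: the consequences.} All remaining statements follow from \eqref{sharp-leading-singular-values}.
\begin{itemize}
\item \textbf{Schatten classes.} The sum $\sum_n s_n^p$ behaves like $\sum_n n^{-2\alpha p}$, which converges if and only if $p>1/(2\alpha)$. This gives \eqref{Schatten-sharp}.
\item \textbf{Critical exponent.} At $p_0=1/(2\alpha)$ we have $s_n^{p_0}=(\pi n)^{-1}(1+O(n^{-1}))$. Summing gives \eqref{critical-log-sum}, and the partial sums diverge logarithmically, so the operator is not in $S_{p_0}$. The limit of $n^{2\alpha}s_n$ exists and equals $\pi^{-2\alpha}$, which gives weak-$S_{p_0}$ membership and \eqref{weak-critical-schatten}.
\item \textbf{Eigenvalues of $|\mathcal L_\omega|$.} Since $\mathcal L_\omega$ is self-adjoint and invertible with compact inverse (Corollary~\ref{compact-resolvent-corollary}), the singular values of $\mathcal L_\omega^{-1}$ are the numbers $|\lambda|^{-1}$, where $\lambda$ runs over the eigenvalues of $\mathcal L_\omega$ with multiplicity. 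Hence $\nu_n=s_n(\mathcal L_\omega^{-1})^{-1}$, and inverting the expansion gives \eqref{absolute-eigenvalue-asymptotics}.
\end{itemize}

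\emph{Step 4: the counting function.} This is the only place needing care, because an $O(n^{-1})$ relative error in $\nu_n$ must be converted into an $O(1)$ error in $N_\omega(R)$. From Step 3 there is a constant $K$ with
\[
\bigl|\nu_n^{1/(2\alpha)}-\pi n\bigr|\le K\qquad\text{for all large } n,
\]
since $(1+O(n^{-1}))^{1/(2\alpha)}=1+O(n^{-1})$ and this relative error is multiplied by $\pi n$. Consequently $\nu_n\le R$ forces $\pi n\le R^{1/(2\alpha)}+K$. Conversely, $\pi n\le R^{1/(2\alpha)}-K$ forces $\nu_n\le R$.

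Because $\nu_n$ is nondecreasing, $N_\omega(R)$ is the largest index $n$ with $\nu_n\le R$. It is therefore squeezed between $\pi^{-1}(R^{1/(2\alpha)}-K)-O(1)$ and $\pi^{-1}(R^{1/(2\alpha)}+K)+O(1)$, where the $O(1)$ also accounts for finitely many small indices. This gives \eqref{weyl-counting-asymptotics}.

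The main obstacle is Step 2. The interlacing must be justified with the correct Ky Fan index convention, and one must check that the index shift by two is harmless relative to the $O(n^{-1})$ error.
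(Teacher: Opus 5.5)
Your proposal is correct and follows essentially the same route as the paper. You transfer Burman's estimate to $T=(I_1^\alpha)^*I_1^\alpha$ by reflection and squaring, then use the finite-rank interlacing $s_{n+r}(T)\le s_n(\mathcal L_\omega^{-1})\le s_{n-r}(T)$, and read off the Schatten, weak-endpoint, eigenvalue and counting-function statements from $\nu_n=s_n(\mathcal L_\omega^{-1})^{-1}$ and $\nu_n^{1/(2\alpha)}=\pi n+O(1)$; the only difference is that you justify the interlacing explicitly via Ky Fan with the fixed shift $2$, where the paper uses $r=\rank F_\omega$ and cites the standard inequalities.
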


\begin{proof}
Let $K_\alpha=I_0^\alpha$ denote the left Riemann--Liouville fractional
integration operator on $L^2(0,1)$.  Because the present paper assumes
$\alpha>1/2$, we invoke directly Burman's sharp estimate \cite{Bur07},
\begin{equation}\label{left-RL-singular-asymptotics}
        s_n(K_\alpha)=(\pi n)^{-\alpha}\bigl(1+O(n^{-1})\bigr),
        \qquad n\to\infty.
\end{equation}
The right-sided operator $I_1^\alpha$ is unitarily equivalent to $K_\alpha$ by the reflection $(Rf)(x)=f(1-x)$.  Therefore
\begin{equation}\label{RL-singular-asymptotics}
        s_n(I_1^\alpha)=(\pi n)^{-\alpha}\bigl(1+O(n^{-1})\bigr).
\end{equation}
Since $T=(I_1^\alpha)^*I_1^\alpha$, the nonzero singular values of $T$ are the squares of the singular values of $I_1^\alpha$.  Hence
\begin{equation}\label{T-sharp-singular-values}
        s_n(T)=(\pi n)^{-2\alpha}\bigl(1+O(n^{-1})\bigr).
\end{equation}
By \eqref{inverse-general-formula},
$F_\omega:=\mathcal L_\omega^{-1}-T$ has finite rank
$r=\rank F_\omega\le2$.  The standard singular-value inequalities for
compact operators \cite{GK69,Sim05} imply, for $n>r$,
\begin{equation}\label{finite-rank-singular-interlacing}
        s_{n+r}(T)
        \le s_n(T+F_\omega)
        \le s_{n-r}(T),
\end{equation}
with the evident interpretation when $r=0$.
Combining \eqref{T-sharp-singular-values} and \eqref{finite-rank-singular-interlacing} gives \eqref{sharp-leading-singular-values}.

The Schatten criterion follows because
$\sum n^{-2\alpha p}$ converges exactly when $2\alpha p>1$.  At
$p_0=1/(2\alpha)$, \eqref{sharp-leading-singular-values} gives
\[
        s_n(\mathcal L_\omega^{-1})^{p_0}
        =(\pi n)^{-1}\bigl(1+O(n^{-1})\bigr),
\]
which proves both the weak-Schatten membership and the logarithmic partial-sum formula \eqref{critical-log-sum}.  Since $\mathcal L_\omega$ is
self-adjoint and invertible, the functional calculus gives
\[
        |\mathcal L_\omega^{-1}|=|\mathcal L_\omega|^{-1}.
\]
Therefore, with $\nu_n$ arranged in nondecreasing order and singular
values arranged in nonincreasing order,
\[
        s_n(\mathcal L_\omega^{-1})=\nu_n^{-1}.
\]
Inverting \eqref{sharp-leading-singular-values} gives
\eqref{absolute-eigenvalue-asymptotics}.  Taking $2\alpha$-th roots yields
\[
        \nu_n^{1/(2\alpha)}=\pi n+O(1).
\]
Hence there is $C>0$ such that, for all sufficiently large $n$,
\[
        \pi n-C\le \nu_n^{1/(2\alpha)}\le \pi n+C.
\]
These two inequalities bound $N_\omega(R)$ between
$\pi^{-1}R^{1/(2\alpha)}+O(1)$ from below and above, proving
\eqref{weyl-counting-asymptotics}.
\end{proof}

\begin{rem}[Reference phase asymptotics and scope]\label{reference-phase-comparison}
For the positive reference realization,
$\mathcal L_{\omega_+}^{-1}=T=I_0^\alpha I_1^\alpha$.  Its integral kernel is
\[
 K_\alpha(x,y)=\frac1{\Gamma(\alpha)^2}
 \int_0^{x\wedge y}(x-t)^{\alpha-1}(y-t)^{\alpha-1}\,dt,
\]
which is the kernel of problem $(P')$ in \cite[pp.~719--720]{CK21}.
For that reference problem, the sharper frequency asymptotic
\[
 \lambda_n^{1/(2\alpha)}
 =\pi n-\frac{\pi}{2}+O(n^{-1})
\]
is already known.  Theorem \ref{power-sharp-schatten} does not improve this
phase formula.  Its contribution is the propagation of the common leading
singular-value constant, optimal Schatten threshold, weak endpoint and Weyl
constant to every invertible self-adjoint boundary plane.  A finite-rank
argument alone does not determine the boundary-dependent second term.
\end{rem}

\begin{rem}[Sharpness]
Let $r=\rank F_\omega$.  The finite-rank singular-value inequalities give
\[
        s_{n+r}(T)\le s_n(T+F_\omega)\le s_{n-r}(T),
        \qquad n>r.
\]
Thus the boundary correction changes the singular-value sequence by at most a bounded index shift and cannot alter the leading constant in
\eqref{sharp-leading-singular-values}.  Consequently the Schatten threshold, the weak endpoint and the Weyl constant are independent of the admissible self-adjoint boundary condition, as long as zero is not an eigenvalue.
\end{rem}

\raggedbottom
\section{Relation to the earlier power-kernel papers and improvement record}
\label{sec:comparison-previous}

The expression considered here, and its unitary reflection, occur in
\cite{TT16,TT18G,TT18M,TT19}.  The ordering
$\mathcal D_1^\alpha D_0^\alpha$ is used in \cite{TT18G,TT19}; the reflected
left-Caputo/right-Riemann--Liouville ordering is used in
\cite{TT16,TT18M}.  Comparisons with the latter are understood after reflection
of the interval.  Those papers use H\"older-type spaces or graph closures,
whereas the present domain is independently defined by
\eqref{distributional-graph-domain} and represented by \eqref{Dmax-def}.  The
respective domains are not identified.  Table
\ref{tab:precise-correction-record} records the earlier formulas or
consequences that are replaced within the present $L^2$ graph realization.

\begin{equation}\label{maximal-kernel-exact}
        \ker\Lmax=\operatorname{span}\{\phi_0,\phi_1\}.
\end{equation}
The sign correction follows directly from the endpoint bracket
\[
 \bigl[(I_0^{1-\alpha}u)\overline{D_0^\alpha v}
 -(D_0^\alpha u)\overline{I_0^{1-\alpha}v}\bigr]_0^1,
\]
whose expansion necessarily gives opposite signs at the two endpoints.  The
particular solution \(Tf=I_0^\alpha I_1^\alpha f\) has trace vector
\(\Xiop(Tf)=(0,B_f,A_f,0)^T\), so the moment powers are \(\alpha\) and
\(\alpha-1\), not \(2\alpha\) and \(2\alpha-1\).  The exact inverse formula
\eqref{inverse-general-formula} then supplies the boundary-dependent homogeneous correction and simultaneously exposes the zero-eigenvalue obstruction through \(M_\omega\).

Burman's sharp singular-value asymptotics for fractional integration
\cite{Bur07}, combined with the finite-rank inverse correction, give
\[
 s_n(\mathcal L_\omega^{-1})
   =(\pi n)^{-2\alpha}\bigl(1+O(n^{-1})\bigr),
\]
for every admissible realization satisfying \(0\in\rho(\mathcal L_\omega)\).
Thus the sharp Schatten condition is \(p>1/(2\alpha)\), rather than the condition
stated in \cite[Corollary~3.6]{TT18G} and repeated in
\cite[Corollary~3.6]{TT19}.  Invertibility must also be checked: the boundary
matrices $\omega_N$ and $\omega_{-1}$ used below possess the zero mode $\phi_0$.

For the represented graph domain, the graph, extension and spectral sections
replace the corresponding homogeneous-mode, Green, inverse, zero-mode and
Schatten formulas in
\cite{TT16,TT18G,TT18M,TT19}.  Theorem
\ref{power-natural-domain-characterization} proves the Volterra representation
from the endpoint-normalized distributional definition, and Lemma
\ref{minimal-adjoint-lemma} identifies the resulting graph operator as the
adjoint of its boundary-minimal zero-trace restriction.

Compact integral formulations, exact and numerical solutions, and discrete
spectral conclusions for selected fractional boundary conditions were already
developed in \cite{KCB18}.  Homogeneous Robin conditions and discreteness are
treated in \cite{Kli19}, while the integral Hilbert--Schmidt approach in
\cite{Kli21} proves discreteness and basis properties for homogeneous Dirichlet
fractional and fractional Prabhakar problems.  Moreover, the positive reference
inverse and a sharper two-term reference asymptotic occur in \cite{CK21}; see
Remark \ref{reference-phase-comparison}.  Thus compactness, discreteness and the
reference inverse are not distinguishing points of the present treatment.  The
distinguishing feature is their combination with a closed $L^2$ graph domain,
an ordinary boundary triplet for all coupled and separated self-adjoint planes,
exact zero-mode and inverse matrices, exact correction rank, and a
boundary-independent leading spectral law.

\begin{table}[H]
\caption{Improvement record for the 2016--2019 power-kernel formulas.}
\label{tab:precise-correction-record}
\small
\setlength{\tabcolsep}{4pt}
\renewcommand{\arraystretch}{1.18}
\begin{tabularx}{\textwidth}{@{}>{\raggedright\arraybackslash}p{0.25\textwidth}YY@{}}
\toprule
Earlier location & Formula or consequence used there & Replacement in the present graph realization \\
\midrule
\cite{TT16}, Lemma~4.1 and the paragraph following equation~(4.4);
\cite[Lemma~1 and p.~183]{TT18M}
& Shifted truncated powers are treated as homogeneous vectors and are used to infer infinitely many linearly independent elements of the maximal kernel.
& The unique representation \eqref{Dmax-def} gives the exact two-dimensional kernel \eqref{maximal-kernel-exact}.  Only the endpoint modes \(\phi_0\) and \(\phi_1\) occur in the present $L^2$ graph domain. \\
\addlinespace
\cite[equation~(4.1)]{TT16}; \cite[equation~(3.6)]{TT18M};
\cite[Lemma~3.1, equation~(3.5)]{TT18G};
\cite[Lemma~3.1, equation~(3.5)]{TT19}
& The displayed boundary sums assign the same orientation to both endpoint pairs.
& The left and right endpoints have opposite orientations, as shown by the Green identity \eqref{eq01-corrected} and the bracket formula \eqref{green-bracket-form}. \\
\addlinespace
\cite[p.~180]{TT18M};
\cite[the paragraph following equation~(3.3) and pp.~475--476]{TT18G};
\cite[the paragraph following equation~(3.3)]{TT19}
& The compatibility moments are written with powers \(2\alpha\) and \(2\alpha-1\), and the inverse is represented by the bare product \(I_0^\alpha I_1^\alpha\) on the resulting constrained subspace.
& The graph traces of \(Tf\) are generated by the moments \(B_f\) and \(A_f\) in \eqref{power-moments}, with powers \(\alpha\) and \(\alpha-1\).  For the boundary conditions \eqref{BC-0-corrected}, the inverse on all of \(L^2(0,1)\) is \eqref{L0-inverse-corrected}, including both homogeneous corrections. \\
\addlinespace
\cite[Lemma~3.5 and equation~(3.8)]{TT18G};
\cite[Lemma~3.5 and equation~(3.8)]{TT19}, together with the corresponding corollaries
& An ordinary inverse and a Schatten conclusion are stated for every listed boundary matrix.
& Invertibility requires \(\det M_\omega\ne0\); see Proposition \ref{kernel-criterion}.  In particular, \(\omega_N\) and the member \(\omega_{-1}\) of the \(\omega_\rho\)-family have the zero mode \(\phi_0\), as shown in \eqref{det-rho}--\eqref{neumann-kernel}. \\
\addlinespace
\cite[Corollary~3.6]{TT18G}; \cite[Corollary~3.6]{TT19}
& The claimed Schatten condition is \(p>2/(1+4\alpha)\).
& The exact asymptotic formula \eqref{sharp-leading-singular-values} gives the sharp equivalence \(\mathcal L_\omega^{-1}\in S_p\) if and only if \(p>1/(2\alpha)\), subject to \(0\in\rho(\mathcal L_\omega)\). \\
\bottomrule
\end{tabularx}
\end{table}
\clearpage
\flushbottom

\section{A Caputo-time diffusion equation}\label{sec:diffusion}

Let $H=L^2(0,1)$, and let $A=\mathcal L_\omega$ be a self-adjoint
realization from Theorem \ref{power-all-selfadjoint-extensions}.  We assume
\begin{equation}\label{diffusion-positive-assumption}
 A\ge a_0I\qquad\text{for some }a_0>0.
\end{equation}
For a general admissible $\omega$, this is an additional hypothesis.  It is
satisfied by the reference realization $A=\mathcal L_{\omega_+}$ by
\eqref{positive-reference-lower-bound}.  Since $A$ has compact resolvent, it
has an orthonormal eigenbasis
\begin{equation}\label{diffusion-eigenpairs}
        Ae_n=\lambda_ne_n,
        \qquad 0<a_0\le\lambda_1\le\lambda_2\le\dots,
        \qquad \lambda_n\to\infty.
\end{equation}
Eigenvalues are repeated according to multiplicity.
For $0<\beta<1$ we consider
\begin{equation}\label{diffusion-abstract-problem}
\begin{cases}
{}^{C}D_{0+,t}^{\beta}u(t)+Au(t)=f(t),&0<t\le T,\\
u(0)=\varphi.
\end{cases}
\end{equation}
Here ${}^{C}D_{0+,t}^{\beta}$ is the Caputo derivative and
\[
 E_{\beta,\gamma}(z):=\sum_{k=0}^{\infty}
 \frac{z^k}{\Gamma(\beta k+\gamma)},
 \qquad E_\beta(z):=E_{\beta,1}(z)
\]
are the Mittag--Leffler functions; see \cite[Eq.~(1.56)]{KST06}.  The
fractional-evolution and eigenfunction-expansion framework is standard; see
\cite[Chapter~2]{Baz01} and \cite[Sections~2--3]{SY11}.  By the Borel
functional calculus set
\begin{equation}\label{diffusion-operator-families}
 S_\beta(t):=E_\beta(-t^\beta A),\quad t\ge0,
 \qquad
 P_\beta(t):=t^{\beta-1}E_{\beta,\beta}(-t^\beta A),\quad t>0.
\end{equation}
At the data regularity used below, problem \eqref{diffusion-abstract-problem}
is understood only in the following mild sense.  In particular, no assertion
is made that $Au$ or ${}^{C}D_{0+,t}^{\beta}u$ is an $H$-valued function.

\begin{defi}[Mild solution]\label{diffusion-mild-definition}
A function $u\in C([0,T];H)$ is a mild solution of
\eqref{diffusion-abstract-problem} if, for every $t\in[0,T]$,
\begin{equation}\label{diffusion-mild-definition-formula}
        u(t)=S_\beta(t)\varphi+
        \int_0^tP_\beta(t-s)f(s)\,ds,
\end{equation}
where the integral is a Bochner integral in $H$.
\end{defi}

\begin{thm}[Existence, uniqueness and stability of the mild solution]\label{diffusion-mild-theorem}
Let $0<\beta<1$, $\varphi\in H$, and $f\in C([0,T];H)$.  Then the
right-hand side of \eqref{diffusion-mild-definition-formula} defines a unique
mild solution $u\in C([0,T];H)$ in the sense of Definition
\ref{diffusion-mild-definition}, and it is given by
\begin{align}\label{diffusion-solution}
 u(t)
 ={}&\sum_{n=1}^{\infty}\varphi_n
       E_{\beta}(-\lambda_nt^{\beta})e_n \notag\\
 &+\sum_{n=1}^{\infty}
   \left[\int_0^t(t-s)^{\beta-1}
   E_{\beta,\beta}\bigl(-\lambda_n(t-s)^{\beta}\bigr)f_n(s)\,ds\right]e_n,
\end{align}
where $\varphi_n=(\varphi,e_n)$ and $f_n(s)=(f(s),e_n)$.  The series converge in $H$ uniformly for $t\in[0,T]$.  Moreover,
\begin{equation}\label{diffusion-stability-estimate}
        \sup_{0\le t\le T}\|u(t)\|_H
        \le C_{\beta,T}
        \left(\|\varphi\|_H+\sup_{0\le t\le T}\|f(t)\|_H\right).
\end{equation}
\end{thm}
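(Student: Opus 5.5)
The plan is to work entirely in the eigenbasis \eqref{diffusion-eigenpairs} and reduce everything to scalar bounds on Mittag--Leffler functions on the negative real axis. The basic fact for $0<\beta<1$ is that $E_\beta(-x)$ and $E_{\beta,\beta}(-x)$ are completely monotone on $[0,\infty)$ (Pollard/Schneider). Hence
\[
0< E_\beta(-x)\le 1,
\qquad
0\le E_{\beta,\beta}(-x)\le \frac1{\Gamma(\beta)},
\qquad x\ge0.
\]
With the Borel functional calculus, $S_\beta(t)e_n=E_\beta(-\lambda_nt^\beta)e_n$ and $P_\beta(t)e_n=t^{\beta-1}E_{\beta,\beta}(-\lambda_nt^\beta)e_n$, because $\lambda_n>0$. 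This gives the operator bounds $\|S_\beta(t)\|\le1$ and $\|P_\beta(t)\|\le t^{\beta-1}/\Gamma(\beta)$.

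Next I would establish that the mild formula is well defined and continuous. For the first term, $S_\beta(t)\varphi=\sum_n\varphi_nE_\beta(-\lambda_nt^\beta)e_n$. Each coefficient is continuous in $t$ and dominated by $|\varphi_n|$. The tail $\sum_{n>N}|\varphi_n|^2$ is uniform in $t$, so the series converges in $H$ uniformly on $[0,T]$ and $t\mapsto S_\beta(t)\varphi$ is continuous, with $S_\beta(0)=I$.

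For the second term I would check measurability first. The map $s\mapsto P_\beta(t-s)f(s)$ is strongly measurable on $(0,t)$: $f$ is continuous and $P_\beta(\cdot)$ is strongly continuous on $(0,\infty)$, again by dominated convergence in the eigenbasis. Its norm is bounded by $(t-s)^{\beta-1}\|f\|_\infty/\Gamma(\beta)$, which is integrable. So the Bochner integral exists and
\[
\Bigl\|\int_0^tP_\beta(t-s)f(s)\,ds\Bigr\|\le \frac{t^\beta}{\Gamma(\beta+1)}\sup\|f\|.
\]
Taking the inner product with $e_n$ commutes with the Bochner integral. This identifies the coefficients and yields \eqref{diffusion-solution}. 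Uniform convergence of the second series follows from the same estimate applied to the projection $(I-\Pi_N)f$. The family $(I-\Pi_N)f(s)$ tends to $0$ uniformly in $s$, because $f([0,T])$ is compact in $H$ and the projections converge uniformly on compact sets. The stability estimate \eqref{diffusion-stability-estimate} then holds with $C_{\beta,T}=\max(1,T^\beta/\Gamma(\beta+1))$.

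The main obstacle is continuity in $t$ of the Duhamel term $v(t)=\int_0^tP_\beta(t-s)f(s)\,ds$. I would first try the uniform-convergence route. Each finite partial sum is continuous, since it is a scalar convolution of the integrable kernel $s^{\beta-1}E_{\beta,\beta}(-\lambda_ns^\beta)$ with a continuous $f_n$. By the previous paragraph, the partial sums converge uniformly on $[0,T]$, so the limit $v$ is continuous. This avoids direct estimates of $P_\beta(t+h)-P_\beta(t)$.

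Uniqueness needs no separate argument. Definition \ref{diffusion-mild-definition} defines $u$ explicitly by \eqref{diffusion-mild-definition-formula}, so any mild solution coincides with the right-hand side.
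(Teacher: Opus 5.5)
Your proof is correct, but the key continuity step follows a different route from the paper's.

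The first difference is minor. The paper bounds the Mittag--Leffler factors by the cited estimate $|E_\beta(-r)|,\ |E_{\beta,\beta}(-r)|\le C_\beta/(1+r)$, with an unspecified constant. You use complete monotonicity instead; for $E_{\beta,\beta}$ this already follows from Pollard's theorem, since $E_{\beta,\beta}=\beta E_\beta'$. This gives you the explicit bounds $\|S_\beta(t)\|\le 1$ and $\|P_\beta(t)\|\le t^{\beta-1}/\Gamma(\beta)$, and the explicit constant $C_{\beta,T}=\max\bigl(1,T^\beta/\Gamma(\beta+1)\bigr)$.

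The substantive difference is how continuity of $v$ is obtained.
\begin{itemize}
\item The paper proves $v\in C([0,T];H)$ directly at the operator level. It splits the convolution into the region $0<t-s<\delta$, which is $O(\delta^\beta)$ by the $t^{\beta-1}$ bound, and the region $t-s\ge\delta$. On the second region $P_\beta(r)$ is strongly continuous and uniformly bounded for $r\in[\delta,T]$, so dominated convergence applies. Uniform convergence of the spectral series is proved only afterwards.
\item You first prove the uniform tail bound $\sup_t\|(I-\Pi_N)v(t)\|\le \frac{T^\beta}{\Gamma(\beta+1)}\sup_s\|(I-\Pi_N)f(s)\|\to 0$. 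You then note that each partial sum is a finite combination of scalar convolutions of an $L^1$ kernel with a continuous function. Hence $v$ is a uniform limit of continuous functions.
\end{itemize}

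Your route is shorter and reuses an estimate the statement needs anyway. However, it depends on compactness of $f([0,T])$ and on the eigenbasis truncation. The paper's splitting argument uses only $\sup_s\|f(s)\|<\infty$ and strong continuity of $P_\beta$. It would therefore still give continuity of $v$ for merely bounded, strongly measurable forcing, where $\sup_s\|(I-\Pi_N)f(s)\|$ need not tend to zero.

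One step you leave implicit should be stated: $(I-\Pi_N)v(t)=\int_0^tP_\beta(t-s)(I-\Pi_N)f(s)\,ds$. This holds because $\Pi_N$ commutes with every $P_\beta(r)$ and passes through the Bochner integral.
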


\begin{proof}
By the spectral theorem, the operator families in
\eqref{diffusion-operator-families} have the expansions
\[
 S_\beta(t)\varphi
 =\sum_{n=1}^{\infty}E_\beta(-\lambda_n t^\beta)
      \varphi_n e_n
\]
and
\[
 P_\beta(t)g
 =\sum_{n=1}^{\infty}t^{\beta-1}
 E_{\beta,\beta}(-\lambda_n t^\beta)(g,e_n)e_n,
 \qquad t>0.
\]
The negative-axis Mittag--Leffler estimate
\cite[Theorem~1.6]{KST06}, equivalently \cite[Lemma~3.1]{SY11}, gives
\[
 |E_\beta(-r)|\le \frac{C_\beta}{1+r},
 \qquad
 |E_{\beta,\beta}(-r)|\le \frac{C_\beta}{1+r},
 \qquad r\ge0;
\]
Hence $S_\beta(t)$ is uniformly bounded on $[0,T]$, while
\[
        \|P_\beta(t)\|\le C_\beta t^{\beta-1},
        \qquad 0<t\le T.
\]
Consequently the Bochner integral
\[
        v(t):=\int_0^tP_\beta(t-s)f(s)\,ds
\]
is well defined and satisfies
\[
        \|v(t)\|_H
        \le \frac{C_\beta t^\beta}{\beta}
        \sup_{0\le s\le T}\|f(s)\|_H.
\]
Dominated convergence in the spectral expansion shows that
$t\mapsto S_\beta(t)\varphi$ is continuous on $[0,T]$ and that
$S_\beta(t)\varphi\to\varphi$ as $t\downarrow0$.
The displayed estimate gives $v(t)\to0$ as $t\downarrow0$.  To prove continuity at a fixed $t_0>0$, choose $0<\delta<t_0/2$ and split the convolution into the regions $0<t-s<\delta$ and $t-s\ge\delta$.  The first part is bounded uniformly for $t$ near $t_0$ by
\[
        \frac{C_\beta\delta^\beta}{\beta}
        \sup_{0\le s\le T}\|f(s)\|_H.
\]
On the second region, $P_\beta(r)$ is strongly continuous and uniformly
bounded for $r\in[\delta,T]$.  Indeed, if
\[
        p_r(\lambda):=r^{\beta-1}E_{\beta,\beta}(-\lambda r^\beta),
\]
then $p_r(\lambda_n)$ is continuous in $r$ for every $n$ and is uniformly
bounded for $r\in[\delta,T]$ and $n\ge1$; Parseval's identity and dominated
convergence therefore give
$\|(P_\beta(r)-P_\beta(r_0))g\|_H\to0$ for every $g\in H$.
After extending the integrand by zero to a fixed interval, dominated
convergence gives continuity of the part away from the diagonal.  Hence
\[
        u(t)=S_\beta(t)\varphi+v(t)
\]
belongs to $C([0,T];H)$.  More explicitly,
\[
 \sup_{0\le t\le T}\|u(t)\|_H
 \le C_\beta\|\varphi\|_H
   +\frac{C_\beta T^\beta}{\beta}
      \sup_{0\le s\le T}\|f(s)\|_H,
\]
which proves \eqref{diffusion-stability-estimate} with a constant depending
only on $\beta$ and $T$.

To see uniform convergence of the spectral series, let $Q_N$ be the orthogonal projection onto $\operatorname{span}\{e_n:n>N\}$.  Since $Q_N$ is a spectral projection of $A$,
\[
        Q_NS_\beta(t)=S_\beta(t)Q_N,
        \qquad
        Q_NP_\beta(t)=P_\beta(t)Q_N.
\]
The initial-data tail is therefore bounded uniformly by
$C_\beta\|Q_N\varphi\|$.  Since $f([0,T])$ is compact in $H$ and
$Q_N\to0$ strongly, the convergence is uniform on this compact set:
\[
        \sup_{0\le s\le T}\|Q_Nf(s)\|_H\longrightarrow0.
\]
The forcing tail satisfies
\[
\begin{aligned}
 \left\|Q_N\int_0^tP_\beta(t-s)f(s)\,ds\right\|_H
 &\le C_\beta\int_0^t(t-s)^{\beta-1}
       \|Q_Nf(s)\|_H\,ds\\
 &\le \frac{C_\beta T^\beta}{\beta}
       \sup_{0\le s\le T}\|Q_Nf(s)\|_H.
\end{aligned}
\]
Expanding the spectral multipliers therefore gives \eqref{diffusion-solution}
with uniform convergence in $H$.  The estimates above show that its
right-hand side belongs to $C([0,T];H)$, so it is a mild solution by
Definition \ref{diffusion-mild-definition}.  Since that definition fixes the
variation-of-constants expression uniquely in terms of $\varphi$ and $f$,
uniqueness in the mild class follows.

For each fixed $n$, the corresponding scalar coefficient is the standard
solution of
\[
        {}^{C}D_{0+,t}^{\beta}u_n(t)+\lambda_n u_n(t)=f_n(t),
        \qquad u_n(0)=\varphi_n.
\]
See \cite[Section~3]{SY11}.  This coefficientwise observation does not, under
the present data assumptions, assert termwise reconstruction of an $H$-valued
strong solution.
\end{proof}

\begin{cor}[The positive reference realization]\label{diffusion-positive-reference}
The positivity hypothesis \eqref{diffusion-positive-assumption} is satisfied
by the reference realization determined by
\[
        \xi_1^-(u)=0,
        \qquad
        \xi_2^+(u)=0.
\]
Indeed,
\[
        \mathcal L_{\omega_+}^{-1}=I_0^\alpha I_1^\alpha,
        \qquad
        \mathcal L_{\omega_+}\ge
        \|I_0^\alpha I_1^\alpha\|^{-1}I,
\]
so Theorem \ref{diffusion-mild-theorem} applies with
$A=\mathcal L_{\omega_+}$.
\end{cor}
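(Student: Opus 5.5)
The plan is to verify the hypothesis \eqref{diffusion-positive-assumption} for $A=\mathcal L_{\omega_+}$ directly from Example \ref{positive-reference-power}, and then invoke Theorem \ref{diffusion-mild-theorem}. Three facts are needed: $\omega_+$ is admissible; $\mathcal L_{\omega_+}^{-1}=T=I_0^\alpha I_1^\alpha$; and the lower bound $\mathcal L_{\omega_+}\ge\|T\|^{-1}I$.

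\emph{Admissibility.} The matrix $\omega_+$ clearly has rank two. A direct computation with \eqref{J-def} gives $\omega_+J\omega_+^*=0$, because the first and fourth standard basis vectors are $J$-orthogonal. Hence $\mathcal L_{\omega_+}$ is self-adjoint by Theorem \ref{power-all-selfadjoint-extensions}.

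\emph{The inverse.} Write $u=Tf+C_0\phi_0+C_1\phi_1$. Using \eqref{homogeneous-traces} and \eqref{Xi-Tf}, we get $\xi_1^-(u)=C_0$ and $\xi_2^+(u)=C_1$. So the boundary conditions force $C_0=C_1=0$. Equivalently, $M_{\omega_+}$ is the identity matrix and $W_{\omega_+}=0$, so Theorem \ref{inverse-general} gives $\mathcal L_{\omega_+}^{-1}=T$.

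\emph{The lower bound.} For $u=Tf$, Property \ref{l6} gives
\[
(\mathcal L_{\omega_+}u,u)=(f,Tf)=\|I_1^\alpha f\|_2^2\ge0 .
\]
We also have $\|u\|^2=(T^2f,f)\le\|T\|(Tf,f)$. This inequality holds because $T$ is bounded, self-adjoint and nonnegative, so $T^2\le\|T\|T$ by the functional calculus. Combining the two displays yields $(\mathcal L_{\omega_+}u,u)\ge\|T\|^{-1}\|u\|^2$ on the whole domain, which is \eqref{diffusion-positive-assumption} with $a_0=\|T\|^{-1}$.

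Note that $\|T\|>0$: $T$ is injective, as shown in Lemma \ref{minimal-adjoint-lemma}, and nonzero. Compactness of the resolvent, needed for the eigenbasis \eqref{diffusion-eigenpairs}, follows from Corollary \ref{compact-resolvent-corollary}. All hypotheses of Theorem \ref{diffusion-mild-theorem} therefore hold.

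There is no real obstacle here. The only point needing care is the operator inequality $T^2\le\|T\|T$. It is immediate from the spectral theorem for the positive compact operator $T$: each eigenvalue $\mu$ of $T$ satisfies $\mu^2\le\|T\|\mu$.
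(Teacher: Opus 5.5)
Your proposal is correct and follows essentially the same route as the paper, which relies on Example~\ref{positive-reference-power}: the conditions $\xi_1^-(u)=\xi_2^+(u)=0$ force $C_0=C_1=0$, so $\mathcal L_{\omega_+}^{-1}=T$. Then $(f,Tf)=\|I_1^\alpha f\|_2^2$ together with $T^2\le\|T\|T$ gives $\mathcal L_{\omega_+}\ge\|T\|^{-1}I$. Your extra checks are all correct: admissibility of $\omega_+$, $M_{\omega_+}=I$, $W_{\omega_+}=0$, and compactness of the resolvent via Corollary~\ref{compact-resolvent-corollary}. They make explicit what the paper takes implicitly from its list of admissible matrices and from that corollary.
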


\begin{rem}
Under stronger assumptions on $\varphi$ and $f$, standard fractional evolution
theory gives strong-solution regularity; see \cite[Chapter~2]{Baz01} and
\cite[Sections~2--3]{SY11}.  This section is included only to illustrate how
the positive spatial realization enters the standard Caputo-time framework;
no new time-fractional well-posedness result is claimed.
\end{rem}

\section{Conclusion}

For the power-kernel expression $\mathcal D_1^\alpha D_0^\alpha$,
$1/2<\alpha<1$, we have defined an endpoint-normalized distributional graph
domain and proved its Volterra representation, including the singular mode
$x^{\alpha-1}$.  The domain admits four bounded graph traces, and its
boundary-minimal zero-trace restriction has the represented graph operator as
its adjoint.  The resulting ordinary boundary triplet gives a uniform
description of all separated and coupled self-adjoint boundary planes within
this realization.

The same representation reduces the zero-eigenvalue question to an explicit
$2\times2$ matrix.  Whenever zero is regular, the inverse is
$I_0^\alpha I_1^\alpha$ plus a boundary correction of rank at most two, with
the correction rank determined exactly by the boundary matrix.  This
finite-rank structure transfers Burman's singular-value estimate to every
invertible self-adjoint restriction and gives a boundary-independent leading
constant, the sharp Schatten threshold, the weak-Schatten endpoint and the
Weyl counting constant.

For the positive reference restriction, the inverse kernel and a sharper
two-term eigenvalue asymptotic were already treated in \cite{CK21}.  The
spectral novelty here is therefore not a new phase asymptotic for that reference
problem, but the extension of the leading singular-value and Weyl laws to the
full self-adjoint boundary family.  The final Caputo-time result is a standard
mild-solution consequence of a positive spatial realization and is included as
an application rather than as an independent time-fractional novelty.

\section*{Statements and Declarations}

\subsection*{Funding}
The author was supported by Grant No. BR31714735 from the Ministry of Science and Higher Education of the Republic of Kazakhstan.

\subsection*{Competing interests}
The author has no relevant financial or non-financial interests to disclose.

\subsection*{Data availability}
No datasets were generated or analyzed during the present study.

\subsection*{Use of generative artificial intelligence}
During the preparation of this work the author used Claude (Anthropic, August 2026) as an editorial and checking aid: for consistency checks of the mathematical exposition, identification of points requiring additional justification, suggestions of relevant literature, and improvement of presentation. All mathematical arguments were verified by the author, and all cited references were checked against the original sources. The author reviewed and edited all output and takes full responsibility for the content of the manuscript.

\end{document}